\theoremstyle{plain}
\newtheorem{thm}{Theorem}
\newtheorem{lem}[thm]{Lemma}
\newtheorem{cl}[thm]{Claim}
\newtheorem{conj}{Conjecture}
\theoremstyle{definition}
\newtheorem{ex}[thm]{Example}
\newtheorem{rem}[thm]{Remark}
\def\final{0}  
\def\iflong{\iffalse}
\newcommand{\knote}[1]{{\color{red}[{\tiny \textbf{Kristóf:} \bf #1}]\marginpar{\color{red}*}}}
\newcommand{\tnote}[1]{{\color{blue}[{\tiny \textbf{Tamás:} \bf #1}]\marginpar{\color{blue}*}}}
\newcommand{\bnote}[1]{{\color{green}[{\tiny \textbf{Bence:} \bf #1}]\marginpar{\color{green}*}}}
\newcommand{\anote}[1]{{\color{purple}[{\tiny \textbf{Áron:} \bf #1}]\marginpar{\color{purple}*}}}
\newcommand{\knote}[1]{}
\newcommand{\tnote}[1]{}
\newcommand{\bnote}[1]{}
\newcommand{\anote}[1]{}
\newcommand{\bR}{\mathbb{R}}
\newcommand{\bZ}{\mathbb{Z}}
\newcommand{\cB}{\mathcal{B}}
\newcommand{\cH}{\mathcal{H}}
\newcommand{\wC}{\widehat{C}}
\renewcommand{\paragraph}{%
  \@startsection{paragraph}{4}%
  {\z@}{1.1ex \@plus 1ex \@minus .2ex}{-0.5em}%
  {\normalfont\normalsize\bfseries}%
}
\title{Cyclic ordering of split matroids}
\author{
Kristóf Bérczi\thanks{MTA-ELTE Matroid Optimization Research Group and HUN-REN–ELTE Egerváry Research Group, Department of Operations Research, Eötvös Loránd University, Budapest, Hungary. Email: \texttt{kristof.berczi@ttk.elte.hu}.}
\and
Áron Jánosik\thanks{MTA-ELTE Matroid Optimization Research Group, Department of Operations Research, Eötvös Loránd University, Budapest, Hungary. Email: \texttt{aron@janosik.hu}.}
\and
Bence Mátravölgyi\thanks{MTA-ELTE Matroid Optimization Research Group, Pázmány Péter sétány 1/C, H-1117, Budapest, Hungary and ETH Zurich, Rämistrasse 101, 8092, Zürich, Switzerland. Email: \texttt{bmatravoelgy@student.ethz.ch}} 
}
\date{}
\begin{document}
\maketitle

\begin{abstract}
There is a long list of open questions rooted in the same underlying problem: understanding the structure of bases or common bases of matroids. These conjectures suggest that matroids may possess much stronger structural properties than are currently known. One example is related to cyclic orderings of matroids. A rank-$r$ matroid is called cyclically orderable if its ground set admits a cyclic ordering such that any interval of $r$ consecutive elements forms a basis. In this paper, we show that if the ground set of a split matroid decomposes into pairwise disjoint bases, then it is cyclically orderable. This result answers a conjecture of Kajitani, Ueno, and Miyano in a special case, and also strengthens Gabow's conjecture for this class of matroids. Our proof is algorithmic, hence it provides a procedure for determining a cyclic ordering in question using a polynomial number of independence oracle calls. 

\noindent \textbf{Keywords:} Cyclic ordering, Paving matroids, Sequential symmetric basis exchanges, Split matroids
\end{abstract}
\section{Introduction}
\label{sec:intro}

Throughout the paper, we denote a matroid by $M=(S,\cB)$, where $S$ is a finite ground set and $\cB$ is the {\it family of bases}, satisfying the so-called {\it basis axioms}: (B1) $\emptyset\in\cB$, and (B2) for any $B_1,B_2\in\cB$ and $e\in B_1-B_2$, there exists $f\in B_2-B_1$ such that $B_1-e+f\in\cB$. The latter property, called the {\it basis exchange axiom}, is one of the most fundamental tools in matroid theory. Nevertheless, it only provides a local characterization of the relationship between bases, which presents a significant stumbling block to further progress.

A rank-$r$ matroid $M = (S, \cB)$ with $|S| = n$ is cyclically orderable if there exists an ordering $S = {s_1, \dots, s_n}$ such that ${s_i, s_{i+1}, \dots, s_{i+r-1}} \in \cB$ for every $i \in [n]$, where indices are understood in a cyclic order. While studying the structure of symmetric exchanges in matroids, Gabow~\cite{gabow1976decomposing} formulated a beautiful conjecture, stating that every matroid whose ground set decomposes into two disjoint bases is cyclically orderable. This question was raised independently by Wiedemann~\cite{wiedemann1984cyclic} and by Cordovil and Moreira~\cite{cordovil1993bases}. The conjecture makes a stronger claim: for a fixed partition, the cyclic ordering can be chosen such that the elements of the two bases in the partition form contiguous intervals.

\begin{conj}[Gabow] \label{conj:gabow}
Let $M=(S,\cB)$ be a matroid and $S=B_1\cup B_2$ be a partition of the ground set into two disjoint bases. Then, $M$ has a cyclic ordering in which the elements of $B_1$ and $B_2$ form intervals.
\end{conj}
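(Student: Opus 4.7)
Finding a cyclic ordering in which $B_1=\{a_1,\ldots,a_r\}$ and $B_2=\{b_1,\ldots,b_r\}$ appear as consecutive intervals amounts to choosing orderings of the two bases so that, for every $k\in\{0,1,\ldots,r\}$, both
\[
X_k := \{a_{k+1},\ldots,a_r,b_1,\ldots,b_k\} \quad\text{and}\quad Y_k := \{b_{k+1},\ldots,b_r,a_1,\ldots,a_k\}
\]
belong to $\cB$. Thus the conjecture asks me to produce two matched \emph{sequential symmetric exchange} chains $B_1=X_0,X_1,\ldots,X_r=B_2$ and $B_2=Y_0,Y_1,\ldots,Y_r=B_1$ that share the same underlying orderings of $B_1$ and $B_2$. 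This reformulation turns a global ordering problem into a question about coordinated chains of single-element swaps in the basis exchange graph, which is the object I would then try to build.

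The natural approach is induction on the rank $r$, constructing the cyclic ordering by peeling off elements from the two ``seams'' at each step. Concretely, I would first select $a_r\in B_1$ and $b_1\in B_2$ with $B_1-a_r+b_1\in\cB$ to serve as one seam, and $a_1\in B_1$, $b_r\in B_2$ with $B_2-b_r+a_1\in\cB$ to serve as the other. With the four boundary positions fixed, it remains to order the interior elements of $B_1-\{a_1,a_r\}$ and $B_2-\{b_1,b_r\}$ so that every intermediate $X_k$ and $Y_k$ is a basis; one would hope to reduce this to a smaller instance of the same conjecture on a suitable minor of $M$. Alternative strategies include a matroid-union / matroid-intersection attack that treats the two orderings as a simultaneous assignment problem in the bipartite exchange graph between $B_1$ and $B_2$, or an augmenting-path argument that iteratively repairs a partial cyclic ordering by rerouting short segments using local exchanges.

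The main obstacle is precisely this need for coordination: the chains $X_k$ and $Y_k$ must be built in tandem, whereas the basis exchange axiom only supplies local, one-sided exchanges. Producing even a single sequential symmetric exchange between two disjoint bases of an arbitrary matroid is itself a substantial open problem, and Gabow's conjecture asks for two such chains compatible with one and the same pair of orderings of $B_1$ and $B_2$. Moreover, the naive peel-off induction is unlikely to close, because after removing the four seam elements there is no general mechanism that guarantees that the remainder still decomposes into two disjoint bases of the resulting minor, nor that the chosen seams remain consistent with a valid ordering of the interior. These global coordination difficulties are exactly what has kept the conjecture open for arbitrary matroids and what forces progress to be restricted to classes, such as graphic matroids and the split matroids treated in this paper, whose additional structure makes the two chains possible to synchronize.
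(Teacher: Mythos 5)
What you have submitted is not a proof, and it does not claim to be one: the statement is Gabow's conjecture, which is open in general, and the paper does not prove it either --- it only establishes the stronger Conjecture~\ref{conj:main} (and hence the $k=2$ case in strengthened form) for split matroids, in Theorem~\ref{thm:main}, citing earlier work for the two-basis split case. Your opening reformulation is correct and in fact matches the viewpoint the paper takes: asking that both windows $X_k$ and $Y_k$ be bases for every $k$ is exactly the ``valid choice'' condition \eqref{eq:star} that the paper maintains phase by phase (there for an arbitrary number of bases, not just two). Everything after that, however, is a list of candidate strategies (a peel-off induction on the rank, a matroid union/intersection formulation, an augmenting-path repair scheme) together with your own reasons why each is unlikely to close, and you concede in the final paragraph that the coordination obstacle is precisely what keeps the conjecture open. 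So there is no argument to verify: no step is carried out, no special class is handled, and no new reduction is established beyond the standard reformulation.

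If you want to see how the coordination problem is actually overcome in a restricted setting, compare with the proof of Theorem~\ref{thm:main}: the orderings of all bases are built simultaneously, and when no valid choice of next elements exists, the hyperedge description of elementary split matroids is exploited --- via Lemma~\ref{lem:tight}, which says that a size-$r$ set tight with respect to a hyperedge is a basis and that a doubly tight set is wedged between the intersection and the union of the two hyperedges --- to extract the structural Claims~\ref{cl:p}--\ref{cl:xxxxxxxx} and reach a contradiction with the maximality of the partial ordering. That argument is intrinsically tied to the split-matroid representation and does not follow from the generic tools you list; your diagnosis that purely local basis-exchange arguments are insufficient for arbitrary matroids is accurate, but it is a diagnosis, not a proof, so the statement remains unproved by your submission.
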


It is not difficult to see that the statement holds for strongly base orderable matroids. The conjecture was settled for graphic matroids \cite{kajitani1988ordering,cordovil1993bases,wiedemann1984cyclic}, sparse paving matroids \cite{bonin2013basis}, matroids of rank at most $4$ \cite{kotlar2013serial} and $5$ \cite{kotlar2013circuits}, split matroids~\cite{berczi2024exchange}, and regular matroids~\cite{berczi2024reconfiguration}. However, the existence of a cyclic ordering remains open in general, even without the constraint of the bases forming intervals.

In~\cite{kajitani1988ordering}, Kajitani, Ueno, and Miyano proposed a conjecture that would provide a full characterization of cyclically orderable matroids. A matroid $M=(S,\cB)$ with {\it rank function} $r_M$ is called {\it uniformly dense} if $|S|\cdot r_M(X)\geq r_M(S)\cdot |X|$ holds for every $X\subseteq S$. It is not difficult to see that a cyclically orderable matroid is necessarily uniformly dense as well, and the conjecture states that this condition is also sufficient.

\begin{conj}[Kajitani, Ueno, and Miyano] \label{conj:cyclic}
A matroid is cyclically orderable if and only if it is uniformly dense.
\end{conj}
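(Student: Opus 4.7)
The ``only if'' direction follows from a short double counting: given a cyclic ordering $s_1,\dots,s_n$ whose every $r$-window $W_i=\{s_i,\dots,s_{i+r-1}\}$ is a basis, each element of an arbitrary $X\subseteq S$ lies in exactly $r$ windows, so $\sum_{i=1}^n |W_i\cap X|=r|X|$. Since each $W_i$ is a basis, $W_i\cap X$ is independent, hence $|W_i\cap X|\le r_M(X)$, and summing yields $r|X|\le n\cdot r_M(X)$, which is uniform density.

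The converse is the hard direction, and my plan would be a two-stage strategy. In the first stage, I would turn the density hypothesis into a combinatorial decomposition. When $n=kr$, uniform density is exactly Edmonds' condition for $S$ to split into $k$ disjoint bases $B_1,\dots,B_k$ via the matroid partition theorem; when $r\nmid n$, one can first pass to a uniformly dense extension whose ground set has cardinality divisible by $r$ and then show that any cyclic ordering of the extension descends to one of $M$. This reduces the problem to the regime considered by Gabow's conjecture and its multi-basis generalisation.

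In the second stage, given $S=B_1\sqcup\cdots\sqcup B_k$, I would build the cyclic order by concatenating orderings of the blocks so that every window straddling a boundary between $B_i$ and $B_{i+1}$ is also a basis. This reduces to finding, for each consecutive pair, a \emph{sequential symmetric exchange}: orderings $e_1,\dots,e_r$ of $B_i$ and $f_1,\dots,f_r$ of $B_{i+1}$ such that $(B_i\setminus\{e_1,\dots,e_j\})\cup\{f_1,\dots,f_j\}$ is a basis for every $j$. Listing $B_i$ as $e_r,\dots,e_1$ immediately before $B_{i+1}$ as $f_1,\dots,f_r$ makes every cross-boundary window coincide with one of these intermediate bases. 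The global bookkeeping is nontrivial because the tail of $B_i$ must simultaneously be the source of the exchange with $B_{i+1}$ and the target of the exchange with $B_{i-1}$; I would try to arrange this by working on a cyclic ``exchange digraph'' whose vertices are the blocks and whose edges carry the partial orders dictated by each pairwise exchange, seeking a consistent simultaneous linearisation.

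The principal obstacle is the very existence of sequential symmetric exchanges of length $r$ between arbitrary disjoint bases: this is essentially Gabow's conjecture, still open in general and established only for restricted classes such as graphic, sparse paving, low-rank, regular, and split matroids. Outside these classes the plan stalls on an unknown input, and I would not expect it to close in full generality without a genuinely new idea relating bases of uniformly dense matroids. For the split case that the present paper targets, I would exploit the structure theorem describing non-spanning circuits via stressed hyperplanes, which should make each required exchange reducible to a counting check on how the evolving window meets the stressed sets, and I expect this to be where the main technical work concentrates.
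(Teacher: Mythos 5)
The statement you are addressing is an open conjecture: the paper does not prove it, and only records the easy direction in passing. Your double-counting argument for that direction is correct and is the standard one (each element lies in exactly $r$ of the $n$ windows, each window meets $X$ in an independent set, hence $r|X|=\sum_i|W_i\cap X|\le n\cdot r_M(X)$). For the converse you do not give a proof, and you say so; what you give is a research plan, so the right assessment is to identify where it genuinely breaks rather than where it merely needs polish.

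Two concrete gaps. First, the reduction from $r\nmid n$ to $r\mid n$ by passing to an extension and ``descending'' the cyclic ordering does not work as stated: if you delete the added elements from a cyclic ordering of the extension, a window of $r$ consecutive surviving elements is not a sub-window of any original window --- it spans a longer arc of the extended ordering interrupted by deleted elements --- so there is no reason it should be a basis of $M$. This is precisely why the non-divisible case is hard; the known partial result (Van den Heuvel--Thomass\'e, Theorem~\ref{thm:cyclic}) handles the coprime case by an entirely different mechanism and does not proceed through an extension. Second, in the divisible case your plan reduces to finding, for all $k$ blocks simultaneously, orderings of each $B_i$ that realize sequential symmetric exchanges with both neighbours at once; this simultaneous compatibility is exactly Conjecture~\ref{conj:main}, which is open even for strongly base orderable matroids. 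The paper's contribution is to resolve it for split matroids, not by a ``counting check on stressed sets'' applied independently to each consecutive pair, but by a global phase-by-phase construction: it orders all $k$ bases in parallel, maintaining the invariant \eqref{eq:star}, and when the next round of choices fails it extracts from the non-redundant hyperplane representation a rigid structure (Claims~\ref{cl:p}--\ref{cl:xxxxxxxx}) that forces a contradiction. The pairwise-exchange-plus-linearization framing you propose would still leave the compatibility of the $k$ pairwise exchanges unresolved, which is the actual crux.
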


Despite the fact that the conjecture would provide entirely new insights into the structure of matroids, very little progress has been made so far. Van den Heuvel and Thomassé~\cite{van2012cyclic} showed that the conjecture is true if $|S|$ and $r(S)$ are coprimes, and Bonin's result~\cite{bonin2013basis} for sparse paving matroids remains true also in this more general setting. Recently, McGuinness~\cite{mcguinness2024cyclic} made a significant progress by verifying the conjecture for paving matroids.

It is worth taking a moment to consider the interpretation of the uniformly dense property. By the matroid union theorem of Edmonds and Fulkerson~\cite{edmonds1965transversals}, the ground set of a matroid $M=(S,\cB)$ can be covered by $k$ bases if and only if $k\cdot r_M(X)\geq |X|$ holds for every $X\subseteq S$. Using this, a matroid is uniformly dense if and only if its ground set can be covered by $\lceil |S|/r_M(S)\rceil $ bases. In other words, the ground set can be decomposed in `almost' disjoint bases, where almost means that the total overlapping between distinct bases is bounded by $r_M(S)-1$. In particular, any matroid whose ground set decomposes into pairwise disjoint bases is uniformly dense. This observation motivates the following strengthening of Gabow's conjecture.

\begin{conj}\label{conj:main}
Let $M=(S,\cB)$ be a matroid and $S=B_1\cup\dots\cup B_k$ be a partition of the ground set into $k$ pairwise disjoint bases. Then, $M$ has a cyclic ordering in which the elements of $B_i$ form an interval for each $i\in[k]$.
\end{conj}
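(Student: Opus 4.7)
The plan is to reduce Conjecture~\ref{conj:main} to the $k=2$ case, which is Gabow's Conjecture~\ref{conj:gabow}, via a patching argument that respects the prescribed partition. Given $S = B_1 \cup \cdots \cup B_k$, for each consecutive pair $(B_i, B_{i+1})$ (indices modulo $k$), consider the restricted matroid $M|_{B_i \cup B_{i+1}}$. This restriction has rank $r$, a ground set of size $2r$, and a partition into two disjoint bases $B_i$ and $B_{i+1}$. Assuming Gabow's conjecture, there is an ordering of $B_i \cup B_{i+1}$ in which $B_i$ and $B_{i+1}$ form intervals and every window of $r$ consecutive elements is a basis of $M|_{B_i \cup B_{i+1}}$, hence a basis of $M$. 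Concatenating these pairwise orderings cyclically would yield the desired cyclic ordering of $S$, provided each $B_i$ receives a consistent internal ordering from its two neighboring pairs.

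To handle the compatibility issue, I would attempt a \emph{prescribed} version of Gabow's conjecture: for any fixed linear ordering of $B_i$, there exists an ordering of $B_{i+1}$ such that their concatenation is a Gabow ordering of $M|_{B_i \cup B_{i+1}}$. Reformulated, this is a \emph{sequential symmetric exchange} statement: if $B_i = (x_1, \dots, x_r)$ in the fixed order, then there exist $y_1, \dots, y_r$ enumerating $B_{i+1}$ such that $\{x_{j+1}, \dots, x_r, y_1, \dots, y_j\}$ is a basis for every $0 \leq j \leq r$. If such exchanges always exist under the hypothesis that $B_i \cup B_{i+1}$ is the ground set of the restriction, then the ordering of $B_i$ inherited from $(B_{i-1}, B_i)$ can be reused as the prescribed input for $(B_i, B_{i+1})$, and the $k$ local Gabow orderings glue into a single cyclic ordering of $S$ in which each $B_i$ is an interval.

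The main obstacle is twofold. First, Gabow's Conjecture~\ref{conj:gabow} itself is open for general matroids, so a proof of Conjecture~\ref{conj:main} in full generality must either establish Gabow's conjecture en route or circumvent it. Second, the prescribed-ordering strengthening is genuinely harder: sequential symmetric exchanges of length equal to the rank are not known to exist between arbitrary pairs of bases, and naive inductive attempts (choosing $y_j$ greedily via the basis exchange axiom applied to $x_j$) can get stuck because the required element $y_j$ must simultaneously close an old basis and remain compatible with all further exchanges. A plausible fallback is to bypass the reduction and argue directly via matroid union: since $S = B_1 \cup \cdots \cup B_k$ witnesses uniform density, one could try to combine the techniques of van den Heuvel and Thomassé~\cite{van2012cyclic} with the recent paving-matroid machinery of McGuinness~\cite{mcguinness2024cyclic}, constructing the cyclic ordering greedily while threading the partition constraint through a potential function that measures how far the current partial ordering deviates from grouping each $B_i$ into an interval, and using local swap moves justified by symmetric exchange to reduce this potential to zero.
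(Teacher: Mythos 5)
Your proposal is a research plan rather than a proof, and it contains two concrete gaps. First, it rests on two statements that are not available: Gabow's Conjecture~\ref{conj:gabow} itself, and a strictly stronger ``prescribed-order'' version in which the ordering of $B_i$ is arbitrary and only $B_{i+1}$ may be reordered. You acknowledge this, but it is worth stressing that the prescribed version is not a mild strengthening: the paper's own proof of Theorem~\ref{thm:main} (the split-matroid case) is organized precisely around the failure of such greedy/prescribed extension --- the algorithm there must repeatedly go back and \emph{replace} already-placed elements $b^i_t$ by $p_i$ or $q_i$ when the next simultaneous choice is blocked, which is exactly the behaviour a prescribed-order reduction forbids. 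Second, and more structurally, even if the single-sided prescribed version were granted, your gluing does not close the cycle. Ordering $B_1$ arbitrarily and propagating through $(B_1,B_2),(B_2,B_3),\dots,(B_{k-1},B_k)$ fixes the orderings of all bases; the wrap-around pair $(B_k,B_1)$ then has \emph{both} of its orderings already determined, so you would need a doubly prescribed statement: any two fixed orderings of two complementary disjoint bases are compatible. That statement is false already in rank $2$: in the graphic matroid of a triangle with one edge doubled, take $B_1=\{a,b\}$, $B_2=\{a',c\}$ with $a$ parallel to $a'$, order $B_1$ as $(b,a)$ and $B_2$ as $(a',c)$; the straddling window $\{a,a'\}$ is not a basis. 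So the wrap-around forces a genuinely new idea (e.g.\ choosing the initial ordering of $B_1$ with foresight about $B_k$), which reintroduces the circularity you were trying to avoid.

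By contrast, the paper does not reduce to consecutive pairs at all: it builds the orderings of all $k$ bases \emph{simultaneously}, phase by phase, choosing in each phase a ``valid'' $k$-tuple $(b^1_j,\dots,b^k_j)$ (in effect a cyclic exchange), and when no valid tuple exists it exploits the hyperedge representation of elementary split matroids (tight sets, Lemma~\ref{lem:tight}) to extract hyperedges $H_i,H'_i$ with rigid combinatorial structure, repair earlier choices, and ultimately derive a contradiction. This is why the paper obtains the conjecture for split matroids only; your fallback paragraph (matroid union plus a potential-function argument combining \cite{van2012cyclic} and \cite{mcguinness2024cyclic}) is not yet an argument and, as written, would not recover even that special case.
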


To the best of our knowledge, Conjecture~\ref{conj:main} has not been previously considered and remains open even for very restricted classes of matroids, such as strongly base orderable matroids. Our main contribution is proving the conjecture for the class of split matroids. Split matroids were first introduced by Joswig and Schröter~\cite{joswig2017matroids} while studying matroid polytopes from a geometric point of view. Since then, this class of matroids has gained importance in many contexts, primarily due to the work of Ferroni and Schröter~\cite{ferroni2023enumerating,ferroni2023merino,ferroni2024tutte,ferroni2024valuative}. 

\begin{thm}\label{thm:main}
Conjecture~\ref{conj:main} is true for split matroids.
\end{thm}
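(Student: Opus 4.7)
My plan is to reformulate the cyclic ordering condition as a sequence of compatible sequential symmetric exchanges, and then exploit the structural description of split matroids. First, I would reduce to the case of a connected non-uniform split matroid. If $M = M_1 \oplus M_2$, every basis $B_i$ decomposes as $B_i^1 \cup B_i^2$ with $B_i^j$ a basis of $M_j$; listing $B_i^1$ before $B_i^2$ inside each interval, every straddling window decomposes into a full basis of one summand plus a straddling window in the other, which reduces the task to each component (itself a split matroid) by induction on the number of elements. If $M = U_{r,n}$, every $r$-subset is a basis, so any ordering with each $B_i$ as an interval works. Otherwise, the structure theorem for connected non-uniform split matroids yields subsets $H_1, \dots, H_m \subseteq S$ with associated integers $r_1, \dots, r_m < r$ such that a set $B$ of size $r$ is a basis if and only if $|B \cap H_j| \le r_j$ for every $j$; moreover, distinct hyperplanes satisfy a near-disjointness condition of the form $|H_i \cap H_j| \le r_i + r_j - r$.

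Let $\pi_i$ denote the sought ordering of $B_i$, set $a_{ij} := |B_i \cap H_j|$ (so $a_{ij} \le r_j$ and $\sum_i a_{ij} = |H_j|$), and let $t_{ij}(s)$ be the number of $H_j$-elements among the first $s$ positions of $\pi_i$. The basis condition on the straddling window of size $r$ at the boundary $i \,|\, i+1$ translates into
\[
t_{i+1,j}(s) - t_{ij}(s) \;\le\; r_j - a_{ij} \qquad (i \in [k] \text{ cyclically},\ j \in [m],\ s \in \{1,\dots,r-1\}).
\]
Summing cyclically over $i$ gives $0 \le k r_j - |H_j|$, which holds because $\sum_i a_{ij} = |H_j| \le k r_j$. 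Thus the system is consistent on aggregate, and the task reduces to jointly realizing all these inequalities by a choice of $\pi_1, \dots, \pi_k$.

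My strategy is to use the two-basis result from \cite{berczi2024exchange} as a building block. For any fixed $\pi_i$, that result combined with the hyperplane constraints produces a compatible $\pi_{i+1}$ realizing a sequential symmetric exchange $B_i \to B_{i+1}$; chaining such exchanges $B_1 \to B_2 \to \dots \to B_k$ handles every boundary but the closing one. The main obstacle is the cyclic closure: the ordering $\pi_k$ produced at the end need not be compatible with the originally chosen $\pi_1$. To address it, I would exploit the split condition $|H_i \cap H_j| \le r_i + r_j - r$, which forces the hyperplanes to be nearly disjoint inside any single $B_i$ and so allows each $B_i$ to support a near-uniform placement of the $H_j$-elements that is simultaneously compatible with both neighbors. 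The cyclic consistency then reduces to an integer feasibility problem on the prefix-count profiles $t_{ij}(\cdot)$, which can be solved by a flow- or Hall-type argument precisely when $|H_j| \le k r_j$ for every $j$; this bound is automatic under the hypothesis of the theorem. Finally, the construction is carried out by a polynomial number of independence-oracle calls, matching the algorithmic claim of the abstract.
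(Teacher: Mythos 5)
Your reduction to connected (hence elementary) split matroids and the uniform case matches the paper, and your prefix-count reformulation of the window constraints is correct. But the core of your argument has two genuine gaps. First, the building block you invoke is stronger than what the two-basis result of \cite{berczi2024exchange} provides: that theorem asserts the \emph{existence} of a pair of orderings of $B_i$ and $B_{i+1}$ making the straddling windows bases; it does not say that for an \emph{arbitrary fixed} $\pi_i$ a compatible $\pi_{i+1}$ exists, and that stronger statement is what your chaining needs (it is false in general, since a bad choice of $\pi_i$ need not extend). Moreover each $\pi_{i+1}$ is constrained from both sides --- its prefixes at the boundary $i\,|\,i{+}1$ and its suffixes at the boundary $i{+}1\,|\,i{+}2$ --- so even a corrected chain does not decouple into independent two-basis problems.

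Second, and more seriously, the ``cyclic closure'' you defer to ``a flow- or Hall-type argument'' is the entire difficulty of the theorem, and the aggregate inequality $|H_j|\le k r_j$ is nowhere near sufficient: summing the prefix-count inequalities only shows consistency on average, not simultaneous realizability, and no concrete feasibility argument is given. The paper's proof does not chain pairwise exchanges at all; it constructs all $k$ orderings in parallel, one position per phase, maintaining a stronger invariant \eqref{eq:star} on every partial window. When no valid $k$-tuple of next elements exists, it extracts for each $i$ a \emph{pair} of tight hyperedges $H_i,H'_i$ with a rigid incidence pattern on witness elements $p_i,q_i$ (Claims~\ref{cl:p}--\ref{cl:q}), shows every remaining element lies in exactly one of two intersection patterns (Claim~\ref{cl:x}), proves all these sets have equal size $s$ (Claim~\ref{cl:card}), propagates tightness back through the already-ordered elements (Claim~\ref{cl:xxxxxxxx}), and finally contradicts the non-redundancy condition \ref{eq:h1} via the bound $s\le r-2s$. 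None of this delicate structure is replaced by anything in your proposal, so the argument as written does not close.
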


It is worth emphasizing that our proof is algorithmic, hence it provides a procedure for determining a cyclic ordering in question using a polynomial number of independence oracle calls. 

\begin{rem}
In fact, we prove a slightly stronger statement: in the cyclic ordering obtained, the bases $B_1,\dots,B_k$ form intervals that follow each other in this order.   
\end{rem}

The rest of the paper is organized as follows. Basic definitions and notation are introduced in Section~\ref{sec:prelim}. We prove Conjecture~\ref{conj:main} for split matroids in Section~\ref{sec:split}. Finally, in Section~\ref{sec:open}, we give a list of related open questions and conjectures that are subject of future research.

\section{Preliminaries}
\label{sec:prelim}

\paragraph{General notation.} We denote the set of {\it nonnegative integers} by $\bZ_+$. For $k\in\bZ_+$, we use $[k]=\{1,\dots,k\}$. Given a ground set $S$, the \emph{difference} of $X,Y\subseteq S$ is denoted by $X-Y$. If $Y$ consists of a single element $y$, then $X-\{y\}$ and $X\cup \{y\}$ are abbreviated as $X-y$ and $X+y$, respectively. The {\it symmetric difference} of $X$ and $Y$ is denoted by $X\triangle Y\coloneqq (X-Y)\cup(Y-X)$.

\paragraph{Split matroids.} For basic definitions on matroids, we refer the reader to~\cite{oxley2011matroid}. Let $S$ be a ground set of size at least $r$, $\cH=\{H_1,\dots, H_q\}$ be a (possibly empty) collection of subsets of $S$, and $r, r_1, \dots, r_q$ be nonnegative integers satisfying 
\begin{align}
|H_i \cap H_j| &\le r_i + r_j -r\ \text{for distinct $i,j\in[q]$,}\tag*{(H1)}\label{eq:h1}\\
|S-H_i| + r_i &\ge r\ \text{for $i\in[q]$.} \tag*{(H2)}\label{eq:h2}
\end{align}
Then the corresponding {\it elementary split matroid} $M=(S,\cB)$ is given by $\cB=\{X\subseteq S\mid |X|= r,\ |X\cap H_i|\leq r_i\ \text{for $i\in[q]$}\}$; see \cite{berczi2023hypergraph} for details. It is easy to see that the underlying hypergraph can be chosen in such a way that 
\begin{align}
r_i &\le r-1\  \text{for $i\in[q]$,} \tag*{(H3)}\label{eq:h3}\\
|H_i| &\ge r_i+1\  \text{for $i\in[q]$.} \tag*{(H4)}\label{eq:h4}
\end{align}
The representation is called {\it non-redundant} if all of \ref{eq:h1}--\ref{eq:h4} hold. A set $F \subseteq S$ is called {\it $H_i$-tight} if $|F \cap H_i| = r_i$.  Finally, a {\it split matroid} is the direct sum of a single elementary split matroid and some (maybe zero) uniform matroids. The connection between elementary and connected split matroids is given by the following result~\cite{berczi2023hypergraph}.

\begin{lem}[B\'erczi, Kir\'aly, Schwarcz, Yamaguchi and Yokoi]\label{lem:esplit}
The classes of connected split matroids and connected elementary split matroids coincide.
\end{lem}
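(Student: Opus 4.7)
The plan is to unpack the definition of split matroid and then use basic properties of direct sums. One direction is immediate: every elementary split matroid is by definition a split matroid (take zero uniform summands), so any connected elementary split matroid is already a connected split matroid.

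For the converse, I would suppose $M$ is a connected split matroid. By definition, $M = M_0 \oplus U_1 \oplus \cdots \oplus U_k$ where $M_0$ is an elementary split matroid and each $U_i$ is a uniform matroid. The key observation is that a direct sum of matroids is connected if and only if all but one of the summands have empty ground set (and that remaining summand is itself connected). Hence the connectedness of $M$ forces all but one of $M_0, U_1, \dots, U_k$ to be empty. In the first case, $M_0$ is the unique nontrivial summand and $M = M_0$ is elementary split, finishing the argument. In the second case, some $U_j$ is the unique nontrivial summand, so $M$ equals a single uniform matroid $U_{r,n}$; I would then invoke the fact that every uniform matroid is itself an elementary split matroid, witnessed by the empty hypergraph $\cH = \emptyset$, since conditions \ref{eq:h1}--\ref{eq:h4} hold vacuously and the induced family of bases is exactly the set of $r$-subsets of $S$.

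I do not anticipate a serious obstacle here; the argument is essentially a triviality about direct sums once one notes that uniform matroids already sit inside the class of elementary split matroids. The only bookkeeping point is that the definition of split matroid allows an arbitrary (possibly zero) number of uniform summands alongside $M_0$, so connectedness has to be used to rule out genuinely mixed decompositions with two or more nontrivial summands.
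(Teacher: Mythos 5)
Your argument is correct as far as this paper's framework goes, but note that the paper does not prove Lemma~\ref{lem:esplit} at all --- it is quoted from the reference of B\'erczi, Kir\'aly, Schwarcz, Yamaguchi and Yokoi --- so there is no internal proof to compare against. Relative to the definition adopted here (a split matroid \emph{is} a direct sum of one elementary split matroid and some uniform matroids), your reasoning is sound: connectivity kills all but one summand of a direct sum, the surviving summand is either the elementary split summand itself or a uniform matroid, and a uniform matroid $U_{r,n}$ is elementary split via the empty hypergraph $\cH=\emptyset$, for which \ref{eq:h1}--\ref{eq:h4} hold vacuously and $\cB$ is the full family of $r$-subsets. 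The one caveat worth stating explicitly is that this makes the lemma almost definitional, which is a sign that the mathematical content lives elsewhere: in the original source, split matroids are defined polyhedrally (following Joswig and Schr\"oter, via compatible splits of the matroid base polytope), and the direct-sum description used as the definition here is itself a theorem. So your proof correctly derives the lemma from the definitions as stated in this paper, but it does not (and cannot, without importing the polyhedral definition) reproduce the substantive argument of the cited work, which is establishing that the hypergraph model of elementary split matroids captures exactly the connected split matroids in the original sense. It would be worth one sentence in your write-up acknowledging that dependence.
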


A nice feature of split matroids is that they generalize paving and sparse paving matroids: paving matroids correspond to the special case when $r_i=r-1$ for $i\in[q]$, while we get back the class of sparse paving matroids if, in addition, $|H_i|=r$ holds for $i\in[q]$. However, unlike the class of paving matroids, split matroids are closed not only under truncation and taking minors but also under duality~\cite{joswig2017matroids}. The following result appeared in~\cite{berczi2023hypergraph}.

\begin{lem}[B\'erczi, Kir\'aly, Schwarcz, Yamaguchi and Yokoi]\label{lem:tight}
Let $M$ be a rank-$r$ elementary split matroid with a non-redundant representation $\cH=\{H_1,\dots,H_q\}$ and $r,r_1,\dots,r_q$. Let $F$ be a set of size $r$.
\begin{enumerate}[label={(\alph*)}] \itemsep0em
\item If $F$ is $H_i$-tight for some index $i\in[q]$ then $F$ is a basis of $M$.\label{it:basis}
\item If $F$ is both $H_i$-tight and $H_j$-tight for distinct $i,j\in[q]$ then $H_i\cap H_j\subseteq F\subseteq H_i\cup H_j$.\label{it:f}
\end{enumerate} 
\end{lem}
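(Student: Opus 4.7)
The plan is to derive both parts from the non-redundancy inequality \ref{eq:h1}, which caps $|H_i \cap H_j|$ by $r_i + r_j - r$, combined with elementary inclusion-exclusion inside the $r$-element set $F$.

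For part~\ref{it:basis}, I would proceed by contradiction. Since $|F| = r$ and $F$ is $H_i$-tight, the only obstruction to $F$ being a basis would be that $|F \cap H_j| \geq r_j + 1$ for some $j \neq i$. Inclusion-exclusion inside $F$ then gives
\[
|F \cap H_i \cap H_j| \;\geq\; |F \cap H_i| + |F \cap H_j| - |F| \;\geq\; r_i + (r_j + 1) - r.
\]
On the other hand $F \cap H_i \cap H_j \subseteq H_i \cap H_j$, and \ref{eq:h1} bounds the right-hand side by $r_i + r_j - r$, contradicting the inequality above. Hence $F$ satisfies every hyperplane constraint, so by the definition of an elementary split matroid $F$ is a basis.

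For part~\ref{it:f} I would compute $|F \cap H_i \cap H_j|$ from both sides simultaneously. Inclusion-exclusion inside $F$ yields
\[
|F \cap H_i \cap H_j| = |F \cap H_i| + |F \cap H_j| - |F \cap (H_i \cup H_j)| \geq r_i + r_j - r,
\]
using $|F \cap (H_i \cup H_j)| \leq |F| = r$. Meanwhile $F \cap H_i \cap H_j \subseteq H_i \cap H_j$, and \ref{eq:h1} bounds the size of $H_i \cap H_j$ by $r_i + r_j - r$ as well. Chaining these inequalities pins every one of them to equality: the equality $|F \cap H_i \cap H_j| = |H_i \cap H_j|$ forces $H_i \cap H_j \subseteq F$, while the equality $|F \cap (H_i \cup H_j)| = r = |F|$ forces $F \subseteq H_i \cup H_j$. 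There is no serious obstacle here; the lemma is a clean bookkeeping exercise once \ref{eq:h1} is in hand, and the noteworthy point is that both inclusions in part~\ref{it:f} fall out of a single inclusion-exclusion identity, with the tightness hypotheses together with \ref{eq:h1} leaving no slack at either end.
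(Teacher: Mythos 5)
Your argument is correct and complete: both parts follow exactly as you say from the defining inequalities of $\cB$, the bound \ref{eq:h1}, and inclusion--exclusion inside $F$, with the equality chain in part~\ref{it:f} forcing both inclusions at once. Note that the paper itself gives no proof of this lemma (it is quoted from the reference of B\'erczi, Kir\'aly, Schwarcz, Yamaguchi and Yokoi), but your derivation is the natural one from the stated definition of an elementary split matroid and matches the standard argument.
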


By Lemma~\ref{lem:tight}\ref{it:basis}, any set of size $r$ that is tight with respect to one of the hyperedges is a basis. We will use this observation throughout without explicitly citing the lemma, to avoid repeatedly referring to part~\ref{it:basis}. 

\section{Proof of Theorem~\ref{thm:main}}
\label{sec:split}

\begin{proof}[Proof of Theorem~\ref{thm:main}]
The theorem clearly holds if $k=1$, while the case when $k=2$ was proved in~\cite{berczi2024exchange}. Therefore, we assume that $k\geq 3$.

Let $M=(S,\cB)$ be a split matroid and $S=B_1\cup\dots\cup B_k$ be a partition of its ground set into $k$ pairwise disjoint bases. First we show that it suffices to consider connected split matroids. To see this, let $M_1=(S_1,\cB_1),\dots,M_t=(S_t,\cB_t)$ be the connected components of $M$, where $|S_j|=n_j$ and the rank of $M_j$ is $r_j$ for $j\in[t]$. For $i\in[k]$ and $j\in[t]$, let $B^j_i\coloneqq B_i\cap S_j$. Then, $S_j=B^j_1\cup\dots\cup B^j_k$ is a decomposition of $S_j$ into pairwise disjoint bases of $M_j$. Let $S_j=\{s^j_1,\dots,s^j_{n_j}\}$ be a cyclic ordering of $M_j$ in which the elements of $B^j_i$ form the interval $I^j_i\coloneqq \{s^j_{(i-1)\cdot r_j+1},\dots,s^j_{i\cdot r_j}\}$ for each $i\in[k]$. Then, 
\begin{equation*}
S=\{I^1_1,I^2_1,\dots,I^t_1,I^1_2,I^2_2,\dots,I^t_2,\dots,I^1_k,I^2_k,\dots,I^t_k\}
\end{equation*}
is a cyclic ordering of $M$ in which $B_i$ forms an interval for each $i\in[k]$. Since Conjecture~\ref{conj:main} clearly holds for uniform matroids, the combination of the above observation and Lemma~\ref{lem:esplit} allows us to assume that $M$ is a rank-$r$ elementary split matroid, defined by a non-redundant representation $\cH$.

The high-level idea of the algorithm is as follows. We build up the orderings for the bases simultaneously in phases. At the beginning of the $j$-th phase, the first $(j-1)$ elements in each of the bases are ordered and the goal is to find the $j$-th element for all of them. We denote the first $(j-1)$ elements that we have already ordered in the $i$-th basis by $(b_{1}^i,\dots,b_{j-1}^{i})$. The elements that are not yet ordered will be referred to as {\it remaining elements} in $B_i$ and their set is denoted by $C_i$, that is, $C_i=B_i-\{b_{1}^i,\dots,b_{j-1}^{i}\}$. The goal is to choose $b_j^i$ in such a way that $(C_i-b_j^i) \cup (b_{1}^{i+1},\dots,b_j^{i+1})\ \text{ forms a basis for all}\ i\in[k]$\footnote{Throughout the proof, indices are meant in a cyclic order, i.e. $k+1\equiv 1$.}; we call such a choice $(b^1_j,\dots,b^k_j)$ {\it valid}. Note that the condition is satisfied in the beginning as it simply requires $C_i=B_i$ to be a basis for each $i\in[k]$. If valid choices exist up to the $r$-th phase, then we get a cyclic ordering of the matroid with the desired properties simply by putting the ordered bases after each other. However, if the next elements cannot be chosen while satisfying the above constraints, we will slightly modify the order of the first $(j-1)$ elements to allow further steps.

Now we turn to the detailed description of the proof. For ease of discussion, we present it as an indirect proof; however, it implicitly implies an algorithm as described above. Let $j\in[r+1]$ be maximal with respect to the property that, for $i\in[k]$, there exist $b_{1}^i,\dots,b_{j-1}^{i}\in B_i$ such that 
\begin{equation}
(b_{\ell}^{i},\dots,b_{j-1}^{i})\cup C_i \cup (b_{1}^{i+1},\dots,b_{\ell-1}^{i+1})\ \text{ forms a basis for all}\ i\in[k],\ell\in[j+1], \tag{$\star$} \label{eq:star}
\end{equation}
where $C_i=B_i-\{b_{1}^i,\dots,b_{j-1}^{i}\}$. If $j=r+1$ then we are done. Therefore, suppose that $j\leq r$. In particular, this means that there is no valid choice of $j$-th elements in the bases. Let $R_i\coloneqq C_i\cup \{b^{i+1}_1,\dots,b^{i+1}_{j-1}\}$ for $i\in[k]$. Then, $R_i$ is a basis by applying \eqref{eq:star} for $\ell=j+1$.

\begin{cl}\label{cl:p}
 For $i\in[k]$, there exist distinct elements $p_i,q_i\in C_i$ and a hyperedge $H_i$ with value $r_i$ satisfying the following:
\begin{enumerate}[label=(\alph*)]\itemsep0em
\item $p_i\in H_i$ for $i\in[k]$ and $p_i\in H_{i+1}$ for $i\in[k-1]$,
\item $q_i\notin H_i$ for $i\in[k]$ and $q_i\notin H_{i+1}$ for $i\in[k-1]$,
\item $p_k\notin H_1$,
\item $R_{i-1}$ is $H_i$-tight for $i\in[k]$.
\end{enumerate}
\end{cl}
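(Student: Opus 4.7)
Since no valid choice $(b_j^1,\dots,b_j^k)$ exists by assumption, for every selection $b_j^i\in C_i$ some exchange $R_i-b_j^i+b_j^{i+1}$ must fail to be a basis. The starting observation will be that in a split matroid the fundamental circuit of $f\notin R$ with respect to a basis $R$ is highly constrained: it equals $(R\cap H)+f$ when $R$ is $H$-tight with $f\in H$ (the $H$ being unique by Lemma~\ref{lem:tight}\ref{it:f}, since $f\in H\cap H'\subseteq R$ would contradict $f\notin R$), and equals $R+f$ otherwise. Consequently $R-e+f\in\cB$ exactly when either $f$ lies in no tight hyperedge of $R$, or the unique tight hyperedge containing $f$ also contains $e$.

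My plan is to build, by induction on $i\in[k-1]$, elements $p_i\in C_i$ and hyperedges $H_{i+1}$ such that $R_i$ is $H_{i+1}$-tight and $\{p_i,p_{i+1}\}\subseteq H_{i+1}$. Having selected the chain through $p_i$, I will look for an $H_{i+1}$ satisfying these three properties together with $C_{i+1}\cap H_{i+1}\neq\emptyset$, and let $p_{i+1}$ be any element of this last intersection. The no-valid-choice hypothesis is what forces the greedy step to succeed: were it to stall, the exchange $R_i-p_i+f$ would succeed for every $f\in C_{i+1}$---either via the tight hyperedge through $f$, which would then have to contain $p_i$, or via the ``no tight hyperedge'' case above---so one could continue the chain freely and eventually close it into a valid $(b_j^1,\dots,b_j^k)$, a contradiction. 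Once $p_k$ is reached, the closing exchange $R_k-p_k+p_1$ itself must fail, otherwise $(p_1,\dots,p_k)$ would already be valid; by the circuit dichotomy above this failure supplies the remaining hyperedge $H_1$, with $R_k$ being $H_1$-tight, $p_1\in H_1$, and $p_k\notin H_1$. Combined with the chain, this yields properties~(a), (c), and~(d).

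The $q_i$'s should come from a short counting argument. Starting from $|C_i|=r-(j-1)$ and combining the tightness identities $|R_{i-1}\cap H_i|=r_i$ and $|R_i\cap H_{i+1}|=r_{i+1}$ with the overlap bound $|H_i\cap H_{i+1}|\le r_i+r_{i+1}-r$ from~\ref{eq:h1} and the bound $r_i\le r-1$ from~\ref{eq:h3}, I would verify that $C_i\setminus(H_i\cup H_{i+1})$ is non-empty (and similarly $C_k\setminus H_k$). Any element found there serves as $q_i$, and distinctness from $p_i$ is automatic because $p_i\in H_i$.

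The main obstacle will be the inductive chain construction: I need to rigorously rule out the case in which the greedy step stalls because the current $p_i$ lies in no tight hyperedge of $R_i$, and convert such a blockage into an explicit valid cyclic selection (thus a contradiction). This will likely require a careful initial choice of $p_1$---perhaps selected so that $R_k$ admits a tight hyperedge containing $p_1$ but disjoint from $C_k$, using the failure of constraint $k$---together with a backtracking argument on the earlier $p_{i'}$'s whenever the chain stalls at some intermediate step.
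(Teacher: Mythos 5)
Your overall strategy---build a greedy chain $p_1,\dots,p_k$, use the failure of the closing exchange $R_k-p_k+p_1$ to extract $H_1$ with $R_k$ $H_1$-tight, $p_1\in H_1$, $p_k\notin H_1$---matches the paper, and your ``exchange dichotomy'' for split matroids (the tight hyperedge through $f\notin R$ is unique by Lemma~\ref{lem:tight}\ref{it:f}, and $R-e+f\in\cB$ iff that hyperedge, if it exists, contains $e$) is correct and is implicitly what the paper uses. However, there are two genuine gaps. First, you try to construct the hyperedges $H_2,\dots,H_k$ \emph{during} the chain construction, insisting at each step on a tight hyperedge of $R_i$ containing both $p_i$ and some element of $C_{i+1}$, and you correctly identify that this can stall---but you do not close that case, and your sketch of why stalling forces every exchange $R_i-p_i+f$ to succeed is wrong (a tight hyperedge through $f$ need not contain $p_i$; that is precisely when the exchange fails). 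The paper sidesteps this entirely: the chain is built using only the basis exchange axiom applied to the bases $R_i$ and $B_{i+1}$ (some $p_{i+1}\in C_{i+1}$ with $R_i-p_i+p_{i+1}\in\cB$ always exists), with no hyperedges involved; the hyperedges $H_2,\dots,H_k$ are extracted \emph{afterwards}, one per step, from the failures of the perturbed choices $(q_1,\dots,q_{i-1},p_i,\dots,p_k)$, each failure being pinned to the single exchange $R_{i-1}-q_{i-1}+p_i$ because every other exchange in that cyclic choice is already known to be a basis.

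Second, your counting argument for the $q_i$'s is not carried out and does not obviously close: at this stage one only knows $|C_i\cap H_{i+1}|\le r_{i+1}$ and $|C_i|=r-j+1$, which is perfectly compatible with $C_i\subseteq H_{i+1}$, so nonemptiness of $C_i\setminus(H_i\cup H_{i+1})$ cannot be read off from \ref{eq:h1} and \ref{eq:h3} alone (indeed, the paper only establishes the fine structure of $C_i$ much later, in Claim~\ref{cl:x}, \emph{using} the $q_i$'s). The paper instead obtains $q_i$ from the basis exchange axiom applied to the failed exchange: since $R_{i-1}-q_{i-1}+p_i$ (resp.\ $R_k-p_k+p_1$ for $i=1$) is not a basis, exchange yields some $q_i\in C_i-p_i$ with $R_{i-1}-q_{i-1}+q_i\in\cB$, and tightness of $R_{i-1}$ with respect to $H_i$ forces $q_i\notin H_i$; the remaining membership $q_i\notin H_{i+1}$ then falls out of the next step's failure analysis. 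This construction also delivers the side property that $R_{i-1}-q_{i-1}+q_i$ is a basis, which your counting route would not provide and which is needed to keep localizing the failures.
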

\begin{proof}
Let $p_1\in C_1$ be an arbitrary element. By the basis exchange property for $R_1$ and $B_2$, there exists an element $p_2 \in C_2$ such that $R_1-p_1+p_2$ forms a basis. By the repeated application of this argument we get $p_i \in C_i$ such that $R_i-p_i+p_{i+1}$ forms a basis for $i\in[k-1]$. 

If $R_k-p_k+p_1$ forms a basis, then $(p_1,\dots,p_k)$ is a valid choice, contradicting the maximality of $j$. Otherwise, there exists a hyperedge $H_1$ with value $r_1$ such that $|H_1 \cap (R_k-p_k+p_1)| > r_1$. Since $R_k$ is a basis, we conclude that $R_k$ is $H_1$-tight, $p_k \notin H_1$, $p_1 \in H_1$ and $|H_1 \cap (R_k-p_k+p_1)| = r_1+1$. By the basis exchange property, there exists an element $q_1 \in C_1-p_1$ such that $R_k-p_k+q_1$ forms a basis, implying $q_1 \notin H_1$. As the choice $(q_1,p_2,\dots,p_k)$ cannot be valid, then there exists a hyperedge $H_2$ with value $r_2$ such that $|H_2 \cap (R_1-q_1+p_2)| > r_2$. Since $R_1$ and $R_1-p_1+p_2$ are both bases, we conclude that $R_1$ is $H_2$-tight, $p_1 \in H_2$, $p_2 \in H_2$, $q_1 \notin H_2$ and $|H_2 \cap (R_1-q_1+p_2)| = r_2+1$. By the basis exchange property, there exists an element $q_2 \in C_2-p_2$ such that $R_1-q_1+q_2$ forms a basis, implying $q_2 \notin H_2$. 
Continuing this procedure, we get elements $p_1,\dots,p_k$, $q_1,\dots,q_k$ and hyperedges $H_1,\dots,H_k$ with values $r_1,\dots,r_k$ satisfying the conditions of the claim.
\end{proof}

\begin{cl}\label{cl:q}
 For $i\in[k]$, there exist a hyperedge $H'_i$ with value $r'_i$ satisfying the following:
\begin{enumerate}[label=(\alph*)]\itemsep0em
\item $p_i\notin H'_i$ for $i\in[k]$ and $p_i\notin H'_{i+1}$ for $i\in[k-1]$,
\item $q_i\in H'_i$ for $i\in[k]$ and $q_i\in H'_{i+1}$ for $i\in[k-1]$,
\item $p_k\in H'_1$,
\item $q_k\in H_1$ and $q_k\notin H'_1$,
\item $R_{i-1}$ is $H'_i$-tight for $i\in[k]$,
\item $H_i\cap H'_i\subseteq R_{i-1}\subseteq H_i\cup H'_i$ for $i\in[k]$.
\end{enumerate}
\end{cl}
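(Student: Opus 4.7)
My plan is to mirror Claim~\ref{cl:p}'s iterative construction, now starting from the sequence $(q_1, \dots, q_k)$. Maximality of $j$ rules out its validity, and Claim~\ref{cl:p} already verified that $R_{i-1} - q_{i-1} + q_i$ is a basis for $i \in \{2, \dots, k\}$, so the only possible failure is the wraparound $R_k - q_k + q_1$. The standard derivation then produces a hyperedge $H'_1$ with value $r'_1$ such that $R_k$ is $H'_1$-tight, $q_1 \in H'_1$ and $q_k \notin H'_1$; combined with $R_k - p_k + q_1$ being a basis, this forces $p_k \in H'_1$. Since $p_k \in H'_1 \setminus H_1$, the hyperedges $H_1$ and $H'_1$ are distinct, so Lemma~\ref{lem:tight}\ref{it:f} yields $H_1 \cap H'_1 \subseteq R_k \subseteq H_1 \cup H'_1$. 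As $p_1 \in H_1$ but $p_1 \notin R_k$, the first inclusion forces $p_1 \notin H'_1$; as $q_k \in R_k$ but $q_k \notin H'_1$, the second inclusion forces $q_k \in H_1$. This simultaneously establishes conditions (a)--(f) at $i=1$.

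The key intermediate step is to show that $R_k - q_k + p_1$ is itself a basis. I would argue by contradiction: a failure would produce a hyperedge $H$ with $R_k$ being $H$-tight, $p_1 \in H$, and $q_k \notin H$. Since $q_k \in H_1$ and $p_1 \notin H'_1$, this $H$ must differ from both $H_1$ and $H'_1$, so $R_k$ would be tight for three pairwise distinct hyperedges. Applying Lemma~\ref{lem:tight}\ref{it:f} to every pair would then force every element of $R_k$ to lie in at least two of $\{H_1, H'_1, H\}$. However, $q_k \in R_k$ is contained only in $H_1$, giving the desired contradiction.

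Having secured the wraparound exchange $R_k - q_k + p_1$, for each $i \in \{2, \dots, k\}$ I would consider the mixed sequence $(p_1, \dots, p_{i-1}, q_i, q_{i+1}, \dots, q_k)$. By Claim~\ref{cl:p} and the previous step, every adjacent exchange in this sequence except $R_{i-1} - p_{i-1} + q_i$ is already known to be a basis, so by maximality of $j$ the failure must occur there, producing a hyperedge $H'_i$ with $R_{i-1}$ being $H'_i$-tight, $p_{i-1} \notin H'_i$, and $q_i \in H'_i$. A direct count using that $R_{i-1} - q_{i-1} + q_i$ and $R_{i-1} - p_{i-1} + p_i$ are bases then gives $q_{i-1} \in H'_i$ and $p_i \notin H'_i$. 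Distinctness $H_i \neq H'_i$ follows from $q_{i-1} \in H'_i \setminus H_i$, so Lemma~\ref{lem:tight}\ref{it:f} delivers (f) as well. The main obstacle is precisely the intermediate claim of the second paragraph: it is here that the two-hyperedge containment available for split matroids, rather than any generic matroid-theoretic fact, is essential to rule out a third tight hyperedge obstructing the mixed sequences; the remaining verifications are bookkeeping that closely mimic Claim~\ref{cl:p}.
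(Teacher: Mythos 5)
Your proposal is correct and follows essentially the same iterative scheme as the paper: test the mixed sequences $(p_1,\dots,p_{i-1},q_i,\dots,q_k)$, use the bases already produced in Claim~\ref{cl:p} to pin down the unique failing exchange, and extract $H'_i$ together with its tightness and the containment of Lemma~\ref{lem:tight}\ref{it:f} from that failure. The only real divergence is your justification that $R_k-q_k+p_1$ is a basis: the paper observes directly that this set is $H_1$-tight (since $R_k$ is $H_1$-tight and $q_k,p_1\in H_1$) and invokes Lemma~\ref{lem:tight}\ref{it:basis}, which is shorter than your three-hyperedge contradiction argument, though yours is also valid.
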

\begin{proof}
As the choice $(q_1,\dots,q_k)$ cannot be valid, there exists a hyperedge $H'_1$ with value $r'_{1}$ such that $q_k \notin H'_1$, $q_1 \in H'_1$ and $R_k$ is $H'_1$-tight. Note that since $q_1 \in H'_1$ and $q_1 \notin H_1$ we have that $H_1$ and $H'_1$ are distinct hyperedges. Since $R_k$ is both $H_1$ and $H'_1$-tight, Lemma~\ref{lem:tight}\ref{it:f} implies that $H_1\cap H'_1\subseteq R_k\subseteq H_1\cup H'_1$, thus $q_k \in H_1$, $p_k \in H'_1$ and $p_1 \notin H'_1$. As the choice $(p_1,q_2,\dots,q_k)$ cannot be valid and $R_k$ is $H_1$-tight by Claim~\ref{cl:p} and $q_k \in H_1$ therefore $R_k-q_k+p_1$ is also $H_1$-tight, there exists a hyperedge $H'_2$ with value $r'_2$ such that $p_1 \notin H'_2$, $q_2 \in H'_2$ and $R_1$ is $H'_2$-tight. Again as $q_2 \in H'_2$ and $q_2 \notin H_2$ we have that $H_2$ and $H'_2$ are distinct hyperedges. Since $R_1$ is both $H_2$ and $H'_2$-tight, Lemma~\ref{lem:tight}\ref{it:f} implies that $H_2\cap H'_2\subseteq R_1\subseteq H_2\cup H'_2$, thus $q_1 \in H'_2$, $p_2 \notin H'_2$. 
Continuing this procedure, we get hyperedges $H'_1,\dots,H'_k$ with values $r'_1,\dots,r'_k$ satisfying the conditions of the claim.
\end{proof}

\begin{cl}\label{cl:x}
For every $i\in[k-1]$ and $x \in C_i$, either $x \in (H_i \cap H_{i+1})-(H'_i \cup H'_{i+1})$ or $x \in (H'_i \cap H'_{i+1})-(H_i \cup H_{i+1})$. For $x\in C_k$, either $x \in (H_k \cap H'_1)-(H'_k \cup H_1)$ or $x \in (H'_k \cap H_1)-(H_k \cup H'_1)$.
\end{cl}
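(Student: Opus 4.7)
The plan is to derive the dichotomy for each $x\in C_i$ by placing $x$ at position $i$ in two carefully designed trial $j$-th element tuples and reading off the membership of $x$ in $H_i,H'_i,H_{i+1},H'_{i+1}$ from their forced invalidity.

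First I would record two ``free'' observations. Since $x\in C_i\subseteq R_i$ and $R_i$ is tight to both $H_{i+1}$ and $H'_{i+1}$, Lemma~\ref{lem:tight}\ref{it:f} yields $R_i\subseteq H_{i+1}\cup H'_{i+1}$, so $x$ belongs to at least one of $H_{i+1},H'_{i+1}$. On the other side, $C_i\cap R_{i-1}=\emptyset$ because the $B_j$'s are pairwise disjoint and $R_{i-1}$ meets $B_i$ only in the already-ordered prefix $\{b_1^i,\dots,b_{j-1}^i\}$; since $R_{i-1}$ is tight to both $H_i$ and $H'_i$, Lemma~\ref{lem:tight}\ref{it:f} also gives $H_i\cap H'_i\subseteq R_{i-1}$, and therefore $x\notin H_i\cap H'_i$.

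For $i\in[k-1]$ I would then examine the two trial tuples
\[
(p_1,\dots,p_{i-1},x,q_{i+1},\dots,q_k) \quad\text{and}\quad (q_1,\dots,q_{i-1},x,p_{i+1},\dots,p_k),
\]
and for $i=k$ the analogous tuples $(p_1,\dots,p_{k-1},x)$ and $(q_1,\dots,q_{k-1},x)$. In each of these, every swap $R_\ell-b^\ell+b^{\ell+1}$ at a position $\ell$ that does not involve $x$ was already shown to be a basis during the proofs of Claims~\ref{cl:p} and~\ref{cl:q}: namely the $p$-chain $R_\ell-p_\ell+p_{\ell+1}$, the $q$-chain $R_\ell-q_\ell+q_{\ell+1}$, and the cyclic swaps $R_k-p_k+q_1$, $R_k-q_k+p_1$. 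By the maximality of $j$ in \eqref{eq:star}, neither tuple is valid, so each must fail at one of its two swaps that involve $x$.

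Each such failure produces a tight hyperedge of the relevant basis that separates the swap-out element from the swap-in one. The heart of the argument is a uniqueness step: for instance, if $R_{i-1}-p_{i-1}+x$ fails with witness $H$, then applying Lemma~\ref{lem:tight}\ref{it:f} to $H$ and $H'_i$ (both tight to $R_{i-1}$) together with $p_{i-1}\in C_{i-1}\subseteq R_{i-1}$ and $p_{i-1}\notin H'_i$ (Claim~\ref{cl:q}) forces $H=H'_i$, hence $x\in H'_i$. The parallel reductions at the three other $x$-swap positions (using the membership facts for $q_{i-1}$, $p_{i+1}$, $q_{i+1}$) identify the remaining witnesses as $H_i$, $H_{i+1}$, $H'_{i+1}$ respectively, and translate the invalidity of the first tuple into ``$x\in H'_i$ or $x\notin H'_{i+1}$'' and of the second into ``$x\in H_i$ or $x\notin H_{i+1}$''. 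Combined with $x\in H_{i+1}\cup H'_{i+1}$ and $x\notin H_i\cap H'_i$, a short case split on whether $x\in H_{i+1}$, $x\in H'_{i+1}$, or both finishes the proof: the ``both'' case forces $x\in H_i\cap H'_i$, a contradiction, and the two remaining cases reproduce precisely the two alternatives stated. The $i=k$ case is handled identically after the cyclic shift at position $k$; because the valid cyclic swap there pairs an $H$-side element with an $H'$-side element, the resulting dichotomy reads $(H_k\cap H'_1)$ versus $(H'_k\cap H_1)$. The main obstacle I anticipate is precisely the uniqueness step identifying each witnessing hyperedge with the intended one; once this is established, the remainder is routine Boolean bookkeeping.
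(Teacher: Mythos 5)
Your argument is correct and follows essentially the same route as the paper: it tests the same two mixed tuples $(p_1,\dots,p_{i-1},x,q_{i+1},\dots,q_k)$ and $(q_1,\dots,q_{i-1},x,p_{i+1},\dots,p_k)$ against the maximality of $j$, relying on the fact that every swap not involving $x$ was already certified as a basis in the proofs of Claims~\ref{cl:p} and~\ref{cl:q}, and on $x\in H_{i+1}\cup H'_{i+1}$, $x\notin H_i\cap H'_i$ coming from the tightness of $R_i$ and $R_{i-1}$. The only difference is presentational: the paper argues directly that a wrong membership pattern for $x$ would make one of these tuples valid (each swap being tight with respect to a suitable hyperedge, hence a basis by Lemma~\ref{lem:tight}\ref{it:basis}), whereas you take the contrapositive and identify the witnessing hyperedge of the failing swap via Lemma~\ref{lem:tight}\ref{it:f}; both versions yield the same dichotomy.
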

\begin{proof}
Consider any element $x \in C_i$ for some $i\in[k-1]$. By Claim~\ref{cl:q}, we know that $x\in H_{i+1}\cup H'_{i+1}$. We distinguish two cases based on which set $x$ belongs to.

Assume first that $x \in H_{i+1}$. If $x \notin H_i$, then $(q_1,\dots,q_{i-1},x,p_{i+1},\dots,p_k)$ is a valid choice for the set of $j$-th elements, contradicting the maximality of $j$. Indeed, $R_\ell$ is $H_{\ell+1}$-tight for every $\ell\in[k]$ by Claim~\ref{cl:p}, so $R_{i-1}-q_{i-1}+x$ and $R_i-x+p_{i+1}$ are $H_i$ and $H_{i+1}$-tight, respectively. Thus we have $x \in H_i$. By Claim~\ref{cl:q}, $H_i\cap H'_i\subseteq R_{i-1}$. As $x\notin R_{i-1}$, we have $x \notin H'_i$. If $x \in H'_{i+1}$, then $(p_1,\dots p_{i-1},x,q_{i+1},\dots,q_k)$ is a valid choice for the set of $j$-th elements, contradicting the maximality of $j$. Indeed, $R_\ell$ is $H'_{\ell+1}$ tight for every $\ell\in[k]$ by Claim~\ref{cl:q}, so $R_{i-1}-p_{i-1}+x$ and $R_i-x+q_{i+1}$ are $H'_i$ and $H'_{i+1}$-tight, respectively. Thus we have $x \notin H'_{i+1}$.
    
Consider now the case $x \in H'_{i+1}$. If $x \notin H'_i$, then $(p_1,\dots p_{i-1},x,q_{i+1},\dots,q_k)$ is a valid choice for the set of $j$-th elements, contradicting the maximality of $j$. Indeed,  $R_\ell$ is $H'_{\ell+1}$-tight for every $\ell\in[k]$ by Claim~\ref{cl:q}, so $R_{i-1}-p_{i-1}+x$ and $R_i-x+q_{i+1}$ are $H'_i$ and $H'_{i+1}$-tight, respectively. Thus we have $x\in H'_i$. By Claim~\ref{cl:q}, $H_i\cap H'_i\subseteq R_{i-1}$. As $x\notin R_{i-1}$, we have $x \notin H_i$. If $x \in H_{i+1}$, then $(q_1,\dots,q_{i-1},x,p_{i+1},\dots,p_k)$ is a valid choice for the set of $j$-th elements, contradicting the maximality of $j$. Indeed, $R_\ell$ is $H_{\ell+1}$ tight for every $\ell\in[k]$ by Claim~\ref{cl:q}, so $R_{i-1}-q_{i-1}+x$ and $R_i-x+p_{i+1}$ are $H_i$ and $H_{i+1}$-tight, respectively. This implies $x \notin H_{i+1}$.

Finally, the statement for $x\in C_k$ follows by replacing $H_{k+1}$ with $H'_1$ and $H'_{k+1}$ with $H_1$ in the argument above and using the right notion of tightness everywhere.
\end{proof}

For $i\in[k-1]$, we define $\wC_i\coloneqq \{x\in C_i\mid x \in (H_i \cap H_{i+1})-(H'_i \cup H'_{i+1})\}$ and $\wC'_i\coloneqq \{x\in C_i\mid x \in (H'_i \cap H'_{i+1})-(H_i \cup H_{i+1})\}$. We further set $\wC_k\coloneqq \{x\in C_k\mid x \in (H_k \cap H'_1)-(H'_k \cup H_1)\}$ and $\wC'_k\coloneqq \{x\in C_k\mid x \in (H'_k \cap H_1)-(H_k \cup H'_1)\}$. By Claim~\ref{cl:x}, $C_i=\wC_i\cup\wC'_i$ and $\wC_i\cap\wC'_i=\emptyset$ holds for each $i\in[k]$.

\begin{cl}\label{cl:card}
There exists an $s\in \bZ_+$ such that $|\wC_i|=|\wC'_i|=s$ for each $i\in[k]$.     
\end{cl}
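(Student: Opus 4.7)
The plan is to extract two families of inequalities between the cardinalities $|\wC_i|,|\wC'_i|$ by evaluating $|R_{i-1}\cap H_i|$, $|R_{i-1}\cap H'_i|$, $|B_i\cap H_i|$, $|B_i\cap H'_i|$ in two different ways, and then to chain the resulting inequalities cyclically so that everything collapses to a single value.

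First I would set $A_i\coloneqq\{b^i_1,\dots,b^i_{j-1}\}$, so that $R_{i-1}=C_{i-1}\cup A_i$ and $B_i=A_i\cup C_i$ are disjoint unions. Claim~\ref{cl:q}(f) gives $R_{i-1}\subseteq H_i\cup H'_i$, so in particular $A_i\subseteq H_i\cup H'_i$. Claim~\ref{cl:x} then pins down how the remaining parts split with respect to $H_i,H'_i$: for $i\in[2,k]$ we have $\wC_{i-1}\subseteq H_i-H'_i$ and $\wC'_{i-1}\subseteq H'_i-H_i$, so $C_{i-1}\cap H_i=\wC_{i-1}$ and $C_{i-1}\cap H'_i=\wC'_{i-1}$. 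At the cyclic boundary $i=1$ the roles flip: $\wC_k\subseteq H'_1-H_1$ and $\wC'_k\subseteq H_1-H'_1$, whence $C_k\cap H_1=\wC'_k$ and $C_k\cap H'_1=\wC_k$. Applying Claim~\ref{cl:x} to $C_i$ itself yields $C_i\cap H_i=\wC_i$ and $C_i\cap H'_i=\wC'_i$ for every $i\in[k]$.

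Writing $a_i\coloneqq|A_i\cap H_i|$ and $a'_i\coloneqq|A_i\cap H'_i|$, the tightness of $R_{i-1}$ for $H_i$ (Claim~\ref{cl:p}(d)) and for $H'_i$ (Claim~\ref{cl:q}(e)) becomes
\[
r_i=a_i+|\wC_{i-1}|,\qquad r'_i=a'_i+|\wC'_{i-1}|\quad\text{for }i\in[2,k],
\]
together with the boundary identities $r_1=a_1+|\wC'_k|$ and $r'_1=a'_1+|\wC_k|$. Meanwhile, since $B_i\in\cB$, we have $|B_i\cap H_i|=a_i+|\wC_i|\leq r_i$ and $|B_i\cap H'_i|=a'_i+|\wC'_i|\leq r'_i$. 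Substituting and cancelling the $a$-terms yields $|\wC_i|\leq|\wC_{i-1}|$ and $|\wC'_i|\leq|\wC'_{i-1}|$ for $i\in[2,k]$, together with the index-swapped boundary inequalities $|\wC_1|\leq|\wC'_k|$ and $|\wC'_1|\leq|\wC_k|$. Concatenating around the cycle gives
\[
|\wC_k|\leq|\wC_{k-1}|\leq\dots\leq|\wC_1|\leq|\wC'_k|\leq|\wC'_{k-1}|\leq\dots\leq|\wC'_1|\leq|\wC_k|,
\]
forcing every inequality to be an equality and yielding a common value $s$.

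The step that requires the most care is the very first one, namely correctly handling the asymmetry at the cyclic boundary in Claim~\ref{cl:x}: the fact that $\wC_k$ lies in $H'_1$ rather than $H_1$ (and symmetrically for $\wC'_k$) is precisely the \emph{twist} that makes the chain above cross exactly once between the unprimed and primed sides per lap. Without this twist one would only conclude that $|\wC_i|$ and $|\wC'_i|$ are each separately constant in $i$, not that the two constants agree.
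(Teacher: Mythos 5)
Your proof is correct and follows essentially the same route as the paper: both arguments compare $|B_i\cap H|\le$ (value of $H$) with the tightness identity $|R_{i-1}\cap H|=$ (value of $H$), cancel the common part $\{b^i_1,\dots,b^i_{j-1}\}$, and chain the resulting inequalities $|\wC_1|\ge\dots\ge|\wC_k|\ge|\wC'_1|\ge\dots\ge|\wC'_k|\ge|\wC_1|$ around the cycle, using the same twist at the boundary via $H_1$ and $H'_1$. The only difference is presentational (your explicit bookkeeping with $A_i$, $a_i$, $a'_i$ versus the paper's direct comparison of intersections).
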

\begin{proof}
As $B_2$ is a basis, we have $|B_2 \cap H_2| \leq r_2$. Since $|R_1 \cap H_2| = r_2$, we get $|\wC_2| = |C_2 \cap H_2| \leq |C_1 \cap H_2| = |\wC_1|$. A repeated application of the same argument leads to $|\wC_1| \geq |\wC_2|\geq \dots \geq |\wC_k|$. Similarly, as $B_1$ is a basis, we have $|B_1 \cap H'_1| \leq r'_{1}$. Since $|R_k \cap H'_1| = r'_1$, we get $|\wC'_1| = |C_1 \cap H'_1| \leq |C_k \cap H'_1| = |\wC_k|$. A repeated application of the same argument leads to $|\wC_k| \geq |\wC'_1| \geq |\wC'_2| \dots \geq |\wC'_k|$. Finally, as $B_1$ is a basis, we have $|B_1 \cap H_1| \leq r_{1}$. Since $|R_k \cap H_1| = r_1$, we get $|\wC_1| = |C_1 \cap H_1| \leq |C_k \cap H_1| = |\wC'_k|$. 

Concluding the above, we get $|\wC_1| \geq |\wC_2|\geq \dots \geq |\wC_k| \geq |\wC'_1| \geq |\wC'_2|\geq  \dots \geq |\wC'_k| \geq |\wC_1|$, finishing the proof of the claim.
\end{proof}

\begin{cl}\label{cl:tight}
For each $i\in[k]$, $B_i$ is both $H_i$ and $H'_i$-tight.
\end{cl}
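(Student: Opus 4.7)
The plan is to prove the $H_i$-tightness of $B_i$ (the $H'_i$ case being analogous) by a direct counting argument that compares $B_i$ with the tight set $R_{i-1}$. Since $R_{i-1} = C_{i-1} \cup \{b_1^i,\dots,b_{j-1}^i\}$ and $B_i = C_i \cup \{b_1^i,\dots,b_{j-1}^i\}$ share the ordered prefix, it suffices to show that $|C_i \cap H_i| = |C_{i-1} \cap H_i|$; then the $H_i$-tightness of $R_{i-1}$ supplied by Claim~\ref{cl:p} immediately gives $|B_i \cap H_i| = r_i$.

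The key input is the partition $C_\ell = \wC_\ell \cup \wC'_\ell$ from Claim~\ref{cl:x} together with the balanced cardinalities $|\wC_\ell| = |\wC'_\ell| = s$ from Claim~\ref{cl:card}. First I would read off from the defining set expressions that $|C_i \cap H_i| = |\wC_i| = s$: for $i \le k-1$ this is because $\wC_i \subseteq H_i \cap H_{i+1}$ while $\wC'_i$ is disjoint from $H_i$, and for $i = k$ the same conclusion follows from $\wC_k \subseteq H_k \cap H'_1$ and $\wC'_k \cap H_k = \emptyset$.

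Next, I would compute $|C_{i-1} \cap H_i|$ and show it also equals $s$. For $2 \le i \le k$ this is $|\wC_{i-1}|$, since the defining hyperedges of $\wC_{i-1}$ are precisely $H_{i-1}$ and $H_i$, while $\wC'_{i-1}$ avoids $H_i$. The case $i = 1$ is the only one that needs separate handling: here the role of $H_{k+1}$ is played by $H'_1$, so $\wC_k$ avoids $H_1$ whereas $\wC'_k \subseteq H_1$, giving $|C_k \cap H_1| = |\wC'_k| = s$. Combining these, $|\{b_1^i,\dots,b_{j-1}^i\} \cap H_i| = r_i - s$ and hence $|B_i \cap H_i| = (r_i - s) + s = r_i$.

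The $H'_i$ case is entirely symmetric: Claim~\ref{cl:q} provides the $H'_i$-tightness of $R_{i-1}$, and the same bookkeeping yields $|C_{i-1} \cap H'_i| = |C_i \cap H'_i| = s$, so that $|B_i \cap H'_i| = r'_i$. I do not expect any conceptual obstacle; the only point demanding care is the cyclic index bookkeeping, in particular the irregular $i = 1$ case, where $H'_1$ takes the place of the missing $H_{k+1}$ in the definitions of $\wC_k$ and $\wC'_k$ and swaps which of the two halves of $C_k$ meets $H_1$ versus $H'_1$.
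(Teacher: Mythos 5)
Your proposal is correct and follows essentially the same route as the paper: both arguments reduce $H_i$-tightness (resp.\ $H'_i$-tightness) of $B_i$ to that of $R_{i-1}$ from Claims~\ref{cl:p} and~\ref{cl:q} by noting the shared prefix $\{b_1^i,\dots,b_{j-1}^i\}$ and using the definitions of $\wC_\ell,\wC'_\ell$ together with Claim~\ref{cl:card} to equate $|C_i\cap H_i|=|\wC_i|=s$ with $|C_{i-1}\cap H_i|$ (which is $|\wC_{i-1}|$ for $i\geq 2$ and $|\wC'_k|$ for $i=1$). Your handling of the irregular $i=1$ case matches the paper's implicit use of $|\wC'_k|=|\wC_k|=s$, so there is nothing to add.
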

\begin{proof}
Recall that $R_{i-1}$ is $H_{i}$-tight by Claim~\ref{cl:p}. Therefore, by Claim~\ref{cl:card}, we have
\begin{align*}
    |B_i\cap H_i|
    {}&{}=
    |(B_i\cap R_{i-1}) \cap H_i|+|(B_i-R_{i-1})\cap H_i|\\
    {}&{}=
    |(B_i\cap R_{i-1}) \cap H_i|+|\wC_i|\\
    {}&{}=
    |(B_i\cap R_{i-1}) \cap H_i|+|\wC_{i-1}|\\
    {}&{}=|R_{i-1}\cap H_i|\\
    {}&{}=
    r_i.
\end{align*}
Similarly, recall that $R_{i-1}$ is $H'_i$-tight by Claim~\ref{cl:q}. Therefore, by Claim~\ref{cl:card}, we have
\begin{align*}
    |B_i\cap H'_i|
    {}&{}=
    |(B_i\cap R_{i-1}) \cap H'_i|+|(B_i-R_{i-1})\cap H'_i|\\
    {}&{}=
    |(B_i\cap R_{i-1}) \cap H'_i|+|\wC'_i|\\
    {}&{}=
    |(B_i\cap R_{i-1}) \cap H'_i|+|\wC'_{i-1}|\\
    {}&{}=|R_{i-1}\cap H'_i|\\
    {}&{}=
    r'_i.
\end{align*}
This concludes the proof of the claim.
\end{proof}

By Claim~\ref{cl:p} we know that $p_k \in H_k$ and $p_k \notin H_1$ therefore $H_1 \neq H_k$. This means that there must exist consecutive indices $p$ and $p+1$ such that $H_p \neq H_{p+1}$. By definition, we know that $|H_p \cap H_{p+1}| \geq |\wC_p| = s$. Since $R_{p-1}$ is $H_p$-tight by Claim~\ref{cl:p} and none of the elements in $\wC'_{p-1}$ is in $H_p$, we get $|R_{p-1}|=r\geq r_p+s$. Since $R_p$ is $H_{p+1}$-tight by Claim~\ref{cl:p} and none of the elements in $\wC'_p$ is in $H_{p+1}$, we get $|R_p|=r\geq r_{p+1}+s$. In the case $p=1$ you need to replace $\wC'_{p-1}$ by $\wC_{k}$ and $R_{p-1}$ by $R_k$ in the argument above. These observations give
\begin{equation*}
s\leq|H_p\cap H_{p+1}|\leq r_p+r_{p+1}-r\leq (r-s)+(r-s)-r=r-2s,    
\end{equation*}
thus $s\leq r/3$. As $r=|B_i|=j-1+|C_i|=j-1+|\wC_i|+|\wC'_i|=j-1+2s\leq j-1+2r/3$, we get $j-1\geq r/3$. In particular, this means that at least one element is already ordered in each of $B_1,\dots,B_k$.\\

Now we turn our attention to the elements that have been already ordered. Consider the elements $b^i_t$ for $i\in[k]$, $t\in[j-1]$. Our goal is to show that the set of hyperedges containing these elements also have a specific structure.

\begin{cl}\label{cl:xxxxxxxx}
We have the following.
\begin{enumerate}[label=(\alph*)]\itemsep0em
\item For every $t\in[j-1]$, $b^i_{t}\in (H_i\triangle H'_i)\cap (H_{i+1}\triangle H'_{i+1})$ for every $i\in[k]$.\label{it:a}
\item For every $t\in[j-1]$, either $\{b^i_t,b^{i+1}_t\}\subseteq H_{i+1}$ or $\{b^i_t,b^{i+1}_t\}\subseteq H'_{i+1}$.\label{it:b}
\item For every $t\in[j-1]$, the set $\{b^i_t,\dots,b^i_{j-1}\} \cup C_i \cup \{b^{i+1}_1,\dots,b^{i+1}_{t-1}\}$ is $H_{i+1}$ and $H'_{i+1}$-tight.\label{it:c}
\end{enumerate}
\end{cl}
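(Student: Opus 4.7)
My plan is to prove (a), (b), (c) together by downward induction on $t$, from $t = j-1$ down to $t = 1$. Write $T^t_i \coloneqq \{b^i_t,\dots,b^i_{j-1}\} \cup C_i \cup \{b^{i+1}_1,\dots,b^{i+1}_{t-1}\}$ for the set appearing in (c); note that $T^j_i = R_i$ and $T^t_i = T^{t+1}_i - b^i_t + b^{i+1}_t$. The key preliminary observation is that once (a) and (c) are both in hand at a given $t$, part (b) follows immediately: the tightness assertions of (c) at $t$ and $t+1$, combined with the single-element swap relating $T^t_i$ to $T^{t+1}_i$, force $b^i_t \in H_{i+1} \iff b^{i+1}_t \in H_{i+1}$ and similarly for $H'_{i+1}$; since (a) says each of $b^i_t, b^{i+1}_t$ lies in exactly one of $H_{i+1}, H'_{i+1}$, the two elements must sit in the same hyperedge. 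Hence the real task is to establish (a) and (c) at each inductive step.

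Claims~\ref{cl:p} and~\ref{cl:q} furnish the needed properties of the base $T^j_i = R_i$: it is $H_{i+1}$- and $H'_{i+1}$-tight, and Lemma~\ref{lem:tight}\ref{it:f} yields $H_{i+1} \cap H'_{i+1} \subseteq T^j_i \subseteq H_{i+1} \cup H'_{i+1}$. At each step of the induction, $T^t_i$ is a basis by the validity condition \eqref{eq:star}, and the single-element swap gives one direction of the tightness implications (for example, $b^i_t \in H_{i+1} \Rightarrow b^{i+1}_t \in H_{i+1}$ from $|T^t_i \cap H_{i+1}| \leq r_{i+1}$ when $T^{t+1}_i$ is $H_{i+1}$-tight by the inductive hypothesis). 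The reverse tightness implications together with the membership structure in (a) I would extract from the maximality of $j$, by a contradiction argument analogous to the proof of Claim~\ref{cl:x}: if $b^i_t$ failed to have the type prescribed by (a) or if tightness of $T^t_i$ failed, I would substitute $b^i_t$ in the ordering by an element $x \in C_i$ of an appropriate type (with admissible types determined by Claim~\ref{cl:x}) so that every window remains a basis, and then use $b^i_t$ as a valid $j$-th entry for $B_i$ paired with compatible $j$-th choices for the remaining bases (drawn from the $p_i, q_i$ of Claim~\ref{cl:p} or closely related substitutions), contradicting the maximality of $j$. Iterating from $t = j-1$ down to $t = 1$ establishes (a), (b), (c) for every $t$.

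I expect the main obstacle to be the maximality-of-$j$ step: identifying the correct substitute $x \in C_i$ for $b^i_t$ together with a compatible valid choice of $j$-th elements for the remaining bases, and verifying that all $k$ window bases continue to hold after the swap. The cyclic twist at the $B_k$--$B_1$ boundary---reflected in Claim~\ref{cl:x} and in the construction of $p_k, q_k$ in Claim~\ref{cl:p}---will likely require separate bookkeeping for $i \in \{k-1, k\}$.
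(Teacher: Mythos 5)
Your inductive skeleton matches the paper's: downward induction on $t$, with parts \ref{it:b} and \ref{it:c} extracted from part \ref{it:a} together with the tightness of the previous window and the fact that $T^t_i=T^{t+1}_i+b^i_t-b^{i+1}_t$ is a basis by \eqref{eq:star} (note your displayed identity has the swap reversed). That reduction is sound. However, the entire substance of the claim lies in establishing part \ref{it:a}, and there your proposal both omits the setup and sketches a contradiction mechanism that is not the one that works. The setup you are missing: $b^i_t\in H_i\cup H'_i$ follows from Claim~\ref{cl:tight} plus Lemma~\ref{lem:tight}\ref{it:f} applied to $B_i$, and $b^i_t\notin H_{i+1}\cap H'_{i+1}$ follows from Claim~\ref{cl:q}(f) since $b^i_t\notin R_i$. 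These two facts are what reduce \ref{it:a} to four concrete bad cases ($b^i_t\in H_i\cap H'_i$ with $b^i_t\notin H_{i+1}$ or $b^i_t\notin H'_{i+1}$; $b^i_t\notin H_{i+1}\cup H'_{i+1}$ with $b^i_t\in H_i-H'_i$ or $b^i_t\in H'_i-H_i$), each of which must be refuted separately, with additional bookkeeping when $i=k$.

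More importantly, your proposed refutation --- substitute $b^i_t$ by some $x\in C_i$ of an admissible type and then ``use $b^i_t$ as a valid $j$-th entry for $B_i$'' --- would not go through: in each bad case $b^i_t$ has precisely the wrong membership pattern (e.g.\ it lies in $H_i\cap H'_i$, or in neither of $H_{i+1},H'_{i+1}$) to serve as a $j$-th element, since validity is certified via tightness with respect to these hyperedges. What the paper does instead is replace $b^i_t$ at position $t$ by the specific element $p_i$ or $q_i$ (chosen per case), verify via the inductive hypothesis \ref{it:c} that every window in \eqref{eq:star} stays tight (hence a basis) after this swap, and then exhibit a hybrid valid choice such as $(q_1,\dots,q_{i-1},p_i,\dots,p_k)$ or $(p_1,\dots,p_{i-1},q_i,\dots,q_k)$ for the $j$-th phase --- with $b^i_t$ merely absorbed into the unordered part $C_i$, never chosen. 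Identifying which of $p_i,q_i$ to insert at position $t$ and which hybrid sequence to use in each of the four cases (and their $i=k$ variants) is the actual content of the proof, and it is not recoverable from the plan as written.
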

\begin{proof}
We prove the statement by induction on $t$ in a decreasing order. Assume that the statement is true for indices greater than $t$; when $t=j-1$, this assumption is meaningless. We first prove that \ref{it:a} holds for $t$. Consider an $i\in[k]$. As $b^i_t \in B_i$ and $B_i$ is $H_i$ and $H'_i$ tight by Claim~\ref{cl:tight}, we get that $b^i_t \in H_i \cup H'_i$. We first prove that $b^i_t \in (H_i\triangle H'_i)$ by showing that $b^i_t\in H_i\cap H'_i$ leads to a contradiction. As $b^i_t\notin R_i$, it is contained in at most one of $H_{i+1}$ and $H'_{i+1}$ by Claim~\ref{cl:q}, hence we consider two cases.\\

\noindent \textbf{Case 1.} $b^i_t\in H_i\cap H'_i$.

\medskip
\noindent \textbf{Case 1.1.} $b^i_t \notin H_{i+1}$.

Suppose first that $i<k$. Substitute $b^i_t$ with $q_i$ in the ordering of $B_i$. We claim that \eqref{eq:star} remains true. Indeed, by Claim~\ref{cl:p}, $q_i \notin H_{i+1}$ and as $\{b^i_{m},\dots,b^i_{j-1}\} \cup C_i \cup \{b^{i+1}_1,\dots,b^{i+1}_{m-1}\}$ is $H_{i+1}$-tight for every $t<m$ by induction, we get that $\{b^i_{m},\dots,b^i_{j-1}\} \cup C_i \cup \{b^{i+1}_1,\dots,b^{i+1}_{m-1}\}-q_i+b^i_t$ remains $H_{i+1}$-tight for every $t<m$. By Claim~\ref{cl:q}, $q_i \in H'_i$ and as $\{b^{i-1}_{m},\dots,b^{i-1}_{j-1}\} \cup C_{i-1} \cup \{b^{i}_1,\dots,b^{i}_{m-1}\}$ is $H'_i$-tight for every $t<m$ by induction, we get that $\{b^{i-1}_{m},\dots,b^{i-1}_{j-1}\} \cup C_{i-1} \cup \{b^{i}_1,\dots,b^{i}_{m-1}\}-b^i_t+q_i$ remains $H'_i$-tight for every $t<m$. After the modification, the choice $(q_1,\dots,q_{i-1},p_i,\dots,p_k)$ is valid for the $j$-th phase if $i\geq 2$. Indeed, by Claim~\ref{cl:p}, $q_{i-1} \notin H_{i}$, $p_i \in H_i$ and $q_i \notin H_i$, so $R_{i-1}-q_{i-1}-b^i_t+p_i+q_i$ is $H_i$-tight. For $i=1$, replace $q_{i-1}$ with $p_k$ and observe that $p_k \notin H_1$ by Claim~\ref{cl:p}. Also by Claim~\ref{cl:p}, $p_{i+1} \in H_{i+1}$, $p_i \in H_{i+1}$ and $q_i \notin H_{i+1}$, so $R_i-p_i-q_i+b^i_t+p_{i+1}$ remains $H_{i+1}$-tight. This contradicts the maximal choice of $j$.\\

Now consider the case $i=k$ and $b^k_t \notin H_{1}$. Substitute $b^k_t$ with $p_k$ in the ordering of $B_k$.  We claim that \eqref{eq:star} remains true. Indeed, by Claim~\ref{cl:p} , $p_k \notin H_1$ and as $\{b^k_{m},\dots,b^k_{j-1}\} \cup C_k \cup \{b^{1}_1,\dots,b^{1}_{m-1}\}$ is $H_1$-tight for every $t<m$ by induction, we get that $\{b^k_{m},\dots,b^k_{j-1}\} \cup C_k \cup \{b^{1}_1,\dots,b^{1}_{m-1}\}-p_k+b^k_t$ remains $H_1$-tight for every $t<m$. By Claim~\ref{cl:p}, $p_k \in H_k$ and as $\{b^{k-1}_{m},\dots,b^{k-1}_{j-1}\} \cup C_{k-1} \cup \{b^{k}_1,\dots,b^{k}_{m-1}\}$ is $H_k$-tight for every $t<m$ by induction, we get that $\{b^{k-1}_{m},\dots,b^{k-1}_{j-1}\} \cup C_{k-1} \cup \{b^{k}_1,\dots,b^{k}_{m-1}\}-b^k_t+p_k$ remains $H_k$-tight for every $t<m$. After the modification, the choice $(p_1,\dots,p_{k-1},q_k)$ is valid for the $j$-th phase. Indeed, by Claim~\ref{cl:q}, $p_{k-1} \notin H'_{k}$, $p_k \notin H'_k$ and $q_k \in H'_k$, so $R_{k-1}-p_{k-1}-b^k_t+p_k+q_k$ is $H'_k$-tight. By Claims~\ref{cl:p} and \ref{cl:q}, $p_1 \in H_1$, $p_k \notin H_1$ and $q_k \in H_1$ so $R_k-p_k-q_k+b^k_t+p_1$ remains $H_1$-tight. This contradicts the maximal choice of $j$.

\medskip
\noindent \textbf{Case 1.2.} $b^i_t \notin H'_{i+1}$.

Suppose first that $i<k$. Substitute $b^i_t$ with $p_i$ in the ordering of $B_i$. We claim that \eqref{eq:star} remains true. Indeed, by Claim~\ref{cl:q}, $p_i \notin H'_{i+1}$ and as $\{b^i_{m},\dots,b^i_{j-1}\} \cup C_i \cup \{b^{i+1}_1,\dots,b^{i+1}_{m-1}\}$ is $H'_{i+1}$-tight for every $t<m$ by induction, we get that $\{b^i_{m},\dots,b^i_{j-1}\} \cup C_i \cup \{b^{i+1}_1,\dots,b^{i+1}_{m-1}\}-p_i+b^i_t$ remains $H'_{i+1}$-tight for every $t<m$. By Claim~\ref{cl:p}, $p_i \in H_i$ and as $\{b^{i-1}_{m},\dots,b^{i-1}_{j-1}\} \cup C_{i-1} \cup \{b^{i}_1,\dots,b^{i}_{m-1}\}$ is $H_i$-tight for every $t<m$ by induction, we get that $\{b^{i-1}_{m},\dots,b^{i-1}_{j-1}\} \cup C_{i-1} \cup \{b^{i}_1,\dots,b^{i}_{m-1}\}-b^i_t+p_i$ remains $H_i$-tight for every $t<m$. After the modification, the choice $(p_1,\dots,p_{i-1},q_i,\dots,q_k)$ is valid for the $j$-th phase if $i\geq 2$. Indeed, by Claim~\ref{cl:q}, $p_{i-1} \notin H'_{i}$, $p_i \notin H'_i$ and $q_i \in H'_i$ so $R_{i-1}-p_{i-1}-b^i_t+p_i+q_i$ is $H'_i$-tight. For $i=1$, replace $p_{i-1}$ with $q_k$ and observe that $q_k \notin H'_1$ by Claim~\ref{cl:q}. Also by Claim~\ref{cl:q}, $q_{i+1} \in H'_{i+1}$, $p_i \notin H'_{i+1}$ and $q_i \in H'_{i+1}$, so $R_i-p_i-q_i+b^i_t+q_{i+1}$ remains $H'_{i+1}$-tight. This contradicts the maximal choice of $j$.\\

Now consider the case $i=k$ and $b^k_t \notin H'_{1}$. Substitute $b^k_t$ with $q_k$ in the ordering of $B_k$. We claim that \eqref{eq:star} remains true. Indeed, by Claim~\ref{cl:q}, $q_k \notin H'_1$ and as $\{b^k_{m},\dots,b^k_{j-1}\} \cup C_k \cup \{b^{1}_1,\dots,b^{1}_{m-1}\}$ is $H'_1$-tight for every $t<m$ by induction, we get that $\{b^k_{m},\dots,b^k_{j-1}\} \cup C_k \cup \{b^{1}_1,\dots,b^{1}_{m-1}\}-q_k+b^k_t$ remains $H'_1$-tight for every $t<m$. By Claim~\ref{cl:q}, $q_k \in H'_k$ and as $\{b^{k-1}_{m},\dots,b^{k-1}_{j-1}\} \cup C_{k-1} \cup \{b^{k}_1,\dots,b^{k}_{m-1}\}$ is $H'_k$-tight for every $t<m$ by induction, we get that $\{b^{k-1}_{m},\dots,b^{k-1}_{j-1}\} \cup C_{k-1} \cup \{b^{k}_1,\dots,b^{k}_{m-1}\}-b^k_t+q_k$ remains $H'_k$-tight for every $t<m$. After the modification, the choice $(q_1,\dots,q_{k-1},p_k)$ is valid for the $j$-th phase. Indeed, by Claim~\ref{cl:p}, $q_{k-1} \notin H_{k}$, $p_k \in H_k$ and $q_k \notin H_k$, so $R_{k-1}-q_{k-1}-b^k_t+p_k+q_k$ is $H_k$-tight. Also by Claim~\ref{cl:q}, $q_1 \in H'_1$, $p_k \in H'_1$ and $q_k \notin H'_1$ so $R_k-p_k-q_k+b^k_t+q_1$ remains $H'_1$-tight. This contradicts the maximal choice of $j$.\\

Summarizing the above, we get $b^i_t \in (H_i\triangle H'_i)$. We now prove that $b^i_t \in (H_{i+1}\triangle H'_{i+1})$. We know that $b^i_t \notin (H_{i+1}\cap H'_{i+1})$, so it suffices to show that $b^i_t \notin (H_{i+1}\cup H'_{i+1})$ leads to a contradiction. We consider two cases based on whether $b^i_t \in H_i - (H'_i\cup H_{i+1} \cup H'_{i+1})$ or $b^i_t \in H'_i - (H_i\cup H_{i+1}\cup H'_{i+1})$.\\

\noindent \textbf{Case 2.1.} $b^i_t \in H_i - (H'_i\cup H_{i+1} \cup H'_{i+1})$.

Suppose first that $i<k$. Substitute $b^i_t$ with $p_i$ in the ordering of $B_i$. We claim that \eqref{eq:star} remains true. Indeed, by Claim~\ref{cl:q}, $p_i \notin H'_{i+1}$ and as $\{b^i_{m},\dots,b^i_{j-1}\} \cup C_i \cup \{b^{i+1}_1,\dots,b^{i+1}_{m-1}\}$ is $H'_{i+1}$-tight for every $t<m$ by induction, we get that $\{b^i_{m},\dots,b^i_{j-1}\} \cup C_i \cup \{b^{i+1}_1,\dots,b^{i+1}_{m-1}\}-p_i+b^i_t$ remains $H'_{i+1}$-tight for every $t<m$. By Claim~\ref{cl:p}, $p_i \in H_i$ and as $\{b^{i-1}_{m},\dots,b^{i-1}_{j-1}\} \cup C_{i-1} \cup \{b^{i}_1,\dots,b^{i}_{m-1}\}$ is $H_i$-tight for every $t<m$ by induction, we get that $\{b^{i-1}_{m},\dots,b^{i-1}_{j-1}\} \cup C_{i-1} \cup \{b^{i}_1,\dots,b^{i}_{m-1}\}-b^i_t+p_i$ remains $H_i$-tight for every $t<m$. After the modification, the choice $(q_1,\dots,q_i,p_{i+1},\dots,p_k)$ is valid for the $j$-th phase if $i\geq 2$. Indeed, by Claim~\ref{cl:q}, $q_{i-1} \in H'_i$, $p_i \notin H'_i$ and $q_i \in H'_i$, so $R_{i-1}-q_{i-1}-b^i_t+p_i+q_i$ is $H'_i$-tight. For $i=1$, replace $q_{i-1}$ with $p_k$ and observe that $p_k \in H'_1$ by Claim~\ref{cl:q}. Also by Claim~\ref{cl:p}, $p_{i+1}\in H_{i+1}$, $p_i \in H_{i+1}$ and $q_i \notin H_{i+1}$, so $R_i-p_i-q_i+b^i_t+p_{i+1}$ remains $H_{i+1}$-tight. This contradicts the maximal choice of $j$.\\

Now consider the case $i=k$ and $b^k_t \in H_k - (H'_k\cup H_{1} \cup H'_{1})$. Substitute $b^k_t$ with $p_k$ in the ordering of $B_k$. We claim that \eqref{eq:star} remains true. Indeed, by Claim~\ref{cl:p}, $p_k \notin H_1$ and as $\{b^k_{m},\dots,b^k_{j-1}\} \cup C_k \cup \{b^{1}_1,\dots,b^{1}_{m-1}\}$ is $H_1$-tight for every $t<m$ by induction, we get that $\{b^k_{m},\dots,b^k_{j-1}\} \cup C_k \cup \{b^{1}_1,\dots,b^{1}_{m-1}\}-p_k+b^k_t$ remains $H_1$-tight for every $t<m$. By Claim~\ref{cl:p}, $p_k \in H_k$ and as $\{b^{k-1}_{m},\dots,b^{k-1}_{j-1}\} \cup C_{k-1} \cup \{b^{k}_1,\dots,b^{k}_{m-1}\}$ is $H_k$-tight for every $t<m$ by induction, we get that $\{b^{k-1}_{m},\dots,b^{k-1}_{j-1}\} \cup C_{k-1} \cup \{b^{k}_1,\dots,b^{k}_{m-1}\}-b^k_t+p_k$ remains $H_k$-tight for every $t<m$. After the modification, the choice $(q_1,\dots,q_k)$ is valid for the $j$-th phase. Indeed, by Claim~\ref{cl:p}, $q_{k-1} \notin H_k$, $p_k \in H_k$ and $q_k \notin H_k$, so $R_{k-1}-q_{k-1}-b^k_t+p_k+q_k$ is $H_{k}$-tight. Also by Claim~\ref{cl:q}, $q_1 \in H'_1$, $q_k \notin H'_1$ and $p_k \in H'_1$, so $R_k-p_k-q_k+b^k_t+q_1$ remains $H'_1$-tight. This contradicts the maximal choice of $j$.

\medskip
\noindent \textbf{Case 2.2.} $b^i_t \in H'_i - (H_i\cup H_{i+1} \cup H'_{i+1})$.

Suppose first that $i<k$. Substitute $b^i_t$ with $q_i$ in the ordering of $B_i$. We claim that \eqref{eq:star} remains true. Indeed, by Claim~\ref{cl:p}, $q_i \notin H_{i+1}$ and as $\{b^i_{m},\dots,b^i_{j-1}\} \cup C_i \cup \{b^{i+1}_1,\dots,b^{i+1}_{m-1}\}$ is $H_{i+1}$-tight for every $t<m$ by induction, we get that $\{b^i_{m},\dots,b^i_{j-1}\} \cup C_i \cup \{b^{i+1}_1,\dots,b^{i+1}_{m-1}\}-q_i+b^i_t$ remains $H_{i+1}$-tight for every $t<m$. By Claim~\ref{cl:q}, $q_i \in H'_i$ and as $\{b^{i-1}_{m},\dots,b^{i-1}_{j-1}\} \cup C_{i-1} \cup \{b^{i}_1,\dots,b^{i}_{m-1}\}$ is $H'_i$-tight for every $t<m$ by induction, we get that $\{b^{i-1}_{m},\dots,b^{i-1}_{j-1}\} \cup C_{i-1} \cup \{b^{i}_1,\dots,b^{i}_{m-1}\}-b^i_t+q_i$ remains $H'_i$-tight for every $t<m$. After the modification, the choice $(p_1,\dots,p_i,q_{i+1},\dots,q_k)$ is valid for the $j$-th phase if $i\geq 2$. Indeed, by Claim~\ref{cl:p}, $p_{i-1} \in H_i$, $p_i \in H_i$ and $q_i \notin H_i$, so $R_{i-1}-p_{i-1}-b^i_t+p_i+q_i$ is $H_i$-tight. For $i=1$, replace $p_{i-1}$ with $q_k$ and observe that $q_k \in H_1$ by Claim~\ref{cl:q}. Also by Claim~\ref{cl:q}, $q_{i+1}\in H'_{i+1}$, $p_i \notin H'_{i+1}$ and $q_i \in H'_{i+1}$, so $R_i-p_i-q_i+b^i_t+q_{i+1}$ remains $H'_{i+1}$-tight. This contradicts the maximal choice of $j$.\\

Now consider the case $i=k$ and $b^k_t \in H'_k - (H_k\cup H_{1} \cup H'_{1})$. Substitute $b^k_t$ with $q_k$ in the ordering of $B_k$. We claim that \eqref{eq:star} remains true. Indeed, by Claim~\ref{cl:q}, $q_k \notin H'_1$ and as $\{b^k_{m},\dots,b^k_{j-1}\} \cup C_k \cup \{b^{1}_1,\dots,b^{1}_{m-1}\}$ is $H'_1$-tight for every $t<m$ by induction, we get that $\{b^k_{m},\dots,b^k_{j-1}\} \cup C_k \cup \{b^{1}_1,\dots,b^{1}_{m-1}\}-q_k+b^k_t$ remains $H'_1$-tight for every $t<m$. By Claim~\ref{cl:q}, $q_k \in H'_k$ and as $\{b^{k-1}_{m},\dots,b^{k-1}_{j-1}\} \cup C_{k-1} \cup \{b^{k}_1,\dots,b^{k}_{m-1}\}$ is $H'_k$-tight for every $t<m$ by induction, we get that $\{b^{k-1}_{m},\dots,b^{k-1}_{j-1}\} \cup C_{k-1} \cup \{b^{k}_1,\dots,b^{k}_{m-1}\}-b^k_t+q_k$ remains $H'_k$-tight for every $t<m$. After the modification, the choice $(p_1,\dots,p_k)$ is valid for the $j$-th phase. Indeed, by Claim~\ref{cl:q}, $p_{k-1} \notin H'_k$, $p_k \notin H'_k$ and $q_k \in H'_k$, so $R_{k-1}-p_{k-1}-b^k_t+p_k+q_k$ is $H'_{k}$-tight. Also by Claims~\ref{cl:p} and \ref{cl:q}, $p_1 \in H_1$, $q_k \in H_1$ and $p_k \notin H_1$, so $R_k-p_k-q_k+b^k_t+p_1$ remains $H_1$-tight. This contradicts the maximal choice of $j$.\\

This finishes the proof of \ref{it:a}, that is, $b^i_{t}\in (H_i\triangle H'_i)\cap (H_{i+1}\triangle H'_{i+1})$. To prove the remaining two properties, observe that $\{b^i_{t+1},\dots,b^i_{j-1}\} \cup C_i \cup \{b^{i+1}_1,\dots,b^{i+1}_{t}\}-b^{i+1}_{t}+b^i_t$ is a basis by \eqref{eq:star}. As $\{b^i_{t+1},\dots,b^i_{j-1}\} \cup C_i \cup \{b^{i+1}_1,\dots,b^{i+1}_{t}\}$ is both $H_{i+1}$ and $H'_{i+1}$-tight, together with $b^i_t \in (H_{i+1}\triangle H'_{i+1})$ and $b^{i+1}_t \in (H_{i+1}\triangle H'_{i+1})$, necessarily $\{b^i_t,b^{i+1}_t\}\subseteq H_{i+1}$ or $\{b^i_t,b^{i+1}_t\}\subseteq H'_{i+1}$ as otherwise the basis would have too large intersection with $H_{i+1}$ or $H'_{i+1}$. This implies 
\begin{equation*}
    |(\{b^i_{t+1},\dots,b^i_{j-1}\} \cup C_i \cup \{b^{i+1}_1,\dots,b^{i+1}_{t}\}) \cap H_{i+1}| = |(\{b^i_{t+1},\dots,b^i_{j-1}\} \cup C_i \cup \{b^{i+1}_1,\dots,b^{i+1}_{t}\}-b^{i+1}_{t}+b^i_t)\cap H_{i+1}|
\end{equation*} 
and 
\begin{equation*}
|(\{b^i_{t+1},\dots,b^i_{j-1}\} \cup C_i \cup \{b^{i+1}_1,\dots,b^{i+1}_{t}\}) \cap H'_{i+1}| = |(\{b^i_{t+1},\dots,b^i_{j-1}\} \cup C_i \cup \{b^{i+1}_1,\dots,b^{i+1}_{t}\}-b^{i+1}_{t}+b^i_t)\cap H'_{i+1}|,
\end{equation*} 
which means that properties \ref{it:b} and \ref{it:c} hold as well.
\end{proof}

Claim~\ref{cl:tight} and \ref{cl:xxxxxxxx} imply that $B_i$ is tight with respect to $H_i$, $H'_i$, $H_{i+1}$ and $H'_{i+1}$. We know that $H_m \neq H_{m+1}$ for some $m<k$. Then, $B_m \subseteq H_m \cup H_{m+1}$ which implies $q_m \in B_m \subseteq H_m \cup H_{m+1}$. However, by Claim~\ref{cl:p}, $q_m \notin H_m \cup H_{m+1}$, a contradiction. This concludes the proof of the theorem.
\end{proof}

\section{Further remarks and open problems}
\label{sec:open}

\subsection{Comments on Conjecture~\ref{conj:cyclic}}

The most important result toward verifying Conjecture~\ref{conj:cyclic} is due to Van den Heuvel and Thomassé~\cite{van2012cyclic}. 

\begin{thm} [Van den Heuvel and Thomassé] \label{thm:cyclic}
Let $M=(S,\cB)$ be a loopless matroid with rank function $r\colon 2^S\to\bZ_+$ and $|S|=n$, and let $g$ denote the greatest common divisor of $r(S)$ and $n$. Then, there exists a partition $S=G_1\cup\dots\cup G_{n/g}$ into sets of size $g$ such that $\bigcup_{t=0}^{r(S)/g-1} G_{i+t}$ is a basis for $i\in[n/g]$ if and only if $r(S)\cdot |X|\leq n\cdot r(X)$ for $X\subseteq S$.
\end{thm}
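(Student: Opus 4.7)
The plan is to build the interval-ordered cyclic sequence by an algorithmic phase-by-phase procedure, and to argue by contradiction that the procedure never gets stuck before all $r$ rounds are completed. Concretely, at the beginning of phase $j$ I would have already chosen the first $j-1$ elements $(b_1^i,\dots,b_{j-1}^i)$ of each basis $B_i$ so that every cyclic window that crosses between $B_i$ and $B_{i+1}$ (using the already-fixed prefixes and the untouched remainder) is a basis; the goal of phase $j$ is to pick the next element from each $B_i$ while preserving this property. A maximal $j$ for which such a choice sequence exists is the natural object to analyze, and if $j=r+1$ the theorem is proved. So I would assume $j\le r$ and derive a contradiction.

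The engine driving the contradiction should be the basis exchange axiom applied cyclically around $B_1,\dots,B_k$. Starting from any $p_1\in C_1$ (the remaining elements of $B_1$), repeated exchanges produce candidate replacements $p_i\in C_i$ that individually preserve each window; the only way they can fail to be a simultaneous valid choice is that some hyperedge $H_i$ of the split representation is tight on the relevant window, and this pins down both $H_i$ and forces a second candidate $q_i\in C_i$. Iterating the same idea on $q_i$ yields a second family of hyperedges $H'_i$, with $H_i\ne H'_i$, both of which are tight on the window $R_{i-1}$. From Lemma~\ref{lem:tight}\ref{it:f} the tight window must sit between $H_i\cap H'_i$ and $H_i\cup H'_i$, which rigidly classifies each remaining element into one of two sets: $\widehat C_i = (H_i\cap H_{i+1})\setminus(H'_i\cup H'_{i+1})$ or $\widehat C'_i = (H'_i\cap H'_{i+1})\setminus(H_i\cup H_{i+1})$ (with the cyclic twist that pairs $H_{k+1}$ with $H'_1$ and $H'_{k+1}$ with $H_1$, reflecting the one flip between $p_k$ and $p_1$). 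A cyclic comparison of $|C_i\cap H_i|$ with $|R_{i-1}\cap H_i|$ then forces $|\widehat C_i|=|\widehat C'_i|=s$ for a common integer $s$, and hence $B_i$ itself is $H_i$- and $H'_i$-tight.

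At this point the already-ordered portion $\{b_t^i:t<j\}$ must also be constrained. The key claim I would prove by reverse induction on $t$ is that $b_t^i\in(H_i\triangle H'_i)\cap(H_{i+1}\triangle H'_{i+1})$ and that $b_t^i$ and $b_t^{i+1}$ lie on the same side of the pair $(H_{i+1},H'_{i+1})$. The proof is a swap argument: if $b_t^i$ landed outside one of these symmetric differences, I could replace it in its slot by $p_i$ or $q_i$, preserving all earlier windows by the inductive tightness of later windows, and then produce a valid phase-$j$ choice, contradicting maximality of $j$. This is the technically heaviest step and will be the main obstacle: there are several cases (inside/outside each of $H_i,H'_i,H_{i+1},H'_{i+1}$), and the cyclic boundary case $i=k$, where the roles of $H_{k+1}$ and $H'_{k+1}$ are swapped with $H'_1$ and $H_1$, needs to be handled separately in each case.

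Once the rigidity of the ordered prefix is established, the counting bound from $s$ kicks in: since $s\le (r_p+r_{p+1}-r)$ for some $p$ with $H_p\ne H_{p+1}$ via axiom \ref{eq:h1} together with $r_p\le r-s$ and $r_{p+1}\le r-s$, one gets $s\le r/3$ and hence $j-1\ge r/3$, so there is at least one already-ordered element in each basis. Pick indices $m$ with $H_m\ne H_{m+1}$ (guaranteed because $p_k\in H_k\setminus H_1$ forces a flip somewhere). Tightness of $B_m$ on both $H_m$ and $H_{m+1}$ combined with Lemma~\ref{lem:tight}\ref{it:f} gives $B_m\subseteq H_m\cup H_{m+1}$; but $q_m\in B_m$ with $q_m\notin H_m$ and $q_m\notin H_{m+1}$ by construction, yielding the contradiction that finishes the proof. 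The reduction to the connected elementary split case via Lemma~\ref{lem:esplit} is routine: order each component separately and interleave the resulting intervals consistently.
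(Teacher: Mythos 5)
Your proposal does not prove the statement in question. Theorem~\ref{thm:cyclic} is the result of Van den Heuvel and Thomass\'e about an \emph{arbitrary loopless matroid}: it asserts that a uniformly dense matroid admits a partition of $S$ into $n/g$ groups of size $g=\gcd(r(S),n)$ whose unions of $r(S)/g$ consecutive groups are bases (together with the easy converse). What you have written is instead a sketch of the proof of Theorem~\ref{thm:main}, i.e.\ the paper's main result on split matroids whose ground set decomposes into $k$ pairwise disjoint bases. The two statements are genuinely different, and your argument cannot be transported: every tool you invoke --- the non-redundant hyperedge representation $\cH=\{H_1,\dots,H_q\}$, the tightness conditions, Lemma~\ref{lem:tight}, the elements $p_i,q_i$ pinned down by hyperedges --- exists only for (elementary) split matroids, whereas Theorem~\ref{thm:cyclic} makes no such structural assumption. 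Moreover, in Theorem~\ref{thm:cyclic} the hypothesis is only uniform density, so the ground set need not decompose into disjoint bases at all ($g$ can be a proper divisor of $r(S)$, and the groups $G_i$ are not given in advance but must be constructed); your phase-by-phase procedure starts from a given partition $S=B_1\cup\dots\cup B_k$ into bases, which is simply not available here. Finally, you do not address the ``only if'' direction (that the existence of such a partition forces $r(S)\cdot|X|\le n\cdot r(X)$), which, while easy, is part of the statement. Note that the paper itself does not prove Theorem~\ref{thm:cyclic}; it cites it from the literature, and its actual proof (in the source reference) proceeds along entirely different lines from the split-matroid argument you describe.
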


In particular, Theorem~\ref{thm:cyclic} settles Conjecture~\ref{conj:cyclic} in the affirmative if $r(S)$ and $n$ are coprimes. Therefore, to prove Conjecture~\ref{conj:cyclic}, it would be enough to verify that, when $M$ is uniformly dense, the elements inside each $G_i$ admit an ordering that together induces a cyclic ordering of $M$. Unfortunately, such an approach cannot work as shown by the following example.

\begin{ex}
Let $S=\{a_1,\dots,a_{10}\}$ and consider the sparse paving matroid defined by the following hyperedges: $\{a_1,a_2,a_3,a_{10}\}$, $\{a_1,a_2,a_4,a_9\}$, $\{a_1,a_3,a_4,a_5\}$, $\{a_2,a_3,a_4,a_6\}$, $\{a_3,a_5,a_6,a_7\}$, $\{a_4,a_5,a_6,a_8\}$, $\{a_5,a_7,a_8,a_9\}$, $\{a_6,a_7,a_8,a_{10}\}$,$\{a_1,a_7,a_9,a_{10}\}$, $\{a_2,a_8,a_9,a_{10}\}$, with the value of $r$ being $4$; see Section~\ref{sec:prelim} for the definition. If $G_i=\{a_{2i-1},a_{2i}\}$ for $i\in[5]$, then it is not difficult to check that $G_i\cup G_{i+1}$ is a basis for every $i\in[10]$. 

However, we claim that the pairs in the sets $G_i$ cannot be ordered in such a way that we get a cyclic ordering of the matroid $M$. Indeed, each $G_i$ is contained in two of the hyperedges, which excludes two of the four possible orderings of the neighboring groups $G_{i-1}$ and $G_{i+1}$. Due to the exclusion of these ordering possibilities, it is not difficult to verify that no suitable ordering exists.
\end{ex}

\subsection{Exchange distance of basis sequences}

Note that Gabow's conjecture can be interpreted as follows: for any two disjoint bases $B_1$ and $B_2$ of a matroid $M$ of rank $r$, there is a sequence of $r$ symmetric exchanges that transforms the pair $(B_1,B_2)$ into $(B_2,B_1)$. The closely related problem of transforming a sequence $(B_1,\dots, B_k)$ of bases into another $(B'_1,\dots,B'_k)$ was proposed by White~\cite{white1980unique}. Let $(B_1,\dots,B_k)$ be a sequence  of $k$ bases of a matroid $M$, and assume that there exist $e\in B_i$, $f\in B_j$ for some $1\leq i<j\leq k$ such that both $B_i-e+f$ and $B_j-f+e$ are bases. Then we say that the sequence $(B_1,\dots,B_{i-1},B_i-e+f,B_{i+1},\dots,B_{j-1},B_j-f+e,B_{j+1},\dots,B_k)$ is obtained from the original one by a \emph{symmetric exchange}. Accordingly, two sequences of bases are called \emph{equivalent} if one can be obtained from the other by a composition of symmetric exchanges. White studied the following question: what is the characterization of two sequences of bases being equivalent?

There is an easy necessary condition. Namely, two sequences $(B_1,\dots,B_k)$ and $(B'_1,\dots,B'_k)$ are called \emph{compatible} if the union of the $B_i$s as a multiset coincides with the union of the $B'_i$s as a multiset. Compatibility is obviously a necessary condition for two sequences being equivalent, and  White conjectured that it is also sufficient.

\begin{conj}[White] \label{conj:white}
Two sequences of $k$ bases are equivalent if and only if they are compatible.
\end{conj}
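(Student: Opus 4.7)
One direction is observed in the paper: each symmetric exchange preserves the multiset union $\bigsqcup_\ell B_\ell$, so equivalence trivially implies compatibility. The substantive content is the converse, which is a long-standing open problem even for very restricted classes of matroids.

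I would attempt the converse first for split matroids, extending the machinery of Theorem~\ref{thm:main}. Step one is the standard reduction: by the direct sum argument at the beginning of the proof of Theorem~\ref{thm:main}, together with the validity of White's conjecture for strongly base orderable matroids (in particular for uniform matroids), it suffices to treat connected elementary split matroids. Step two is to introduce a potential $\Phi = \sum_{i=1}^{k} |B_i \triangle B_i'|$ and prove that whenever $\Phi > 0$, some symmetric exchange on the sequence $(B_1,\dots,B_k)$ strictly decreases $\Phi$; iteration then yields equivalence. Step three is to certify the existence of such a $\Phi$-decreasing exchange indirectly: supposing every candidate swap is blocked by some tight hyperedge, one propagates the blocking in the style of Claims~\ref{cl:p} and~\ref{cl:q} to produce pairs $H, H'$ of hyperedges simultaneously tight on certain $B_i$, forcing $H \cap H' \subseteq B_i \subseteq H \cup H'$ by Lemma~\ref{lem:tight}\ref{it:f}; combined with the compatibility hypothesis $\bigsqcup_i B_i = \bigsqcup_i B_i'$ applied hyperedge by hyperedge, these constraints should collapse into a contradiction.

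The main obstacle I foresee is the likely non-monotonicity of $\Phi$. In Theorem~\ref{thm:main} the bases being constructed follow a fixed linear order $B_1,\dots,B_k$, which tightly constrains where the blocking hyperedges can sit; in White's problem, by contrast, the two compatible sequences may disagree on all $k$ bases in completely uncorrelated positions, so the blocking hyperedges need not arrange themselves into any tractable cyclic pattern. A single-parameter potential such as $\Phi$ is probably too crude and one would need a lex-ordered tuple, or a strategy that first routes both sequences through a canonical third sequence — for instance the cyclic ordering produced by Theorem~\ref{thm:main} when $|S| = k \cdot r$ and the $B_i$ are disjoint. Beyond the split class, already the case $k = 2$ of White's conjecture implies Gabow's conjecture, which is itself open, so I would not expect a proof at full generality without substantially new ideas.
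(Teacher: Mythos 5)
The statement you were asked about is Conjecture~\ref{conj:white}, which the paper does not prove: it is presented in Section~\ref{sec:open} as a long-standing open problem of White, with only the trivial direction (symmetric exchanges preserve the multiset union, hence equivalence implies compatibility) noted. Your proposal correctly identifies this, but it is not a proof of anything beyond that trivial direction: Steps two and three are a research plan, not an argument. Nothing is established about the existence of a $\Phi$-decreasing exchange; the claim that blocked swaps ``should collapse into a contradiction'' via Lemma~\ref{lem:tight}\ref{it:f} is precisely the part that would constitute the proof, and you yourself concede in the final paragraph that the potential $\Phi$ is likely non-monotone and that the blocking hyperedges need not organize into the cyclic pattern that makes the proof of Theorem~\ref{thm:main} work. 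In Theorem~\ref{thm:main} the whole engine (Claims~\ref{cl:p}--\ref{cl:xxxxxxxx}) depends on the target configuration being the fixed cyclic shift of a partition into disjoint bases, which pins the tight hyperedges $H_i, H'_i$ to consecutive pairs $R_{i-1}, R_i$; for two arbitrary compatible sequences there is no analogue of the sets $R_i$, so the machinery does not transfer as stated. Hence the gap is not a fixable detail: the substantive content of the conjecture is simply absent from the proposal, consistent with its status as open (even the reduction to connected elementary split matroids would only address a special class, for which the pair case $k=2$ is treated in~\cite{berczi2024exchange}).

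One factual correction: you write that the case $k=2$ of White's conjecture implies Gabow's conjecture. The implication goes the other way. Equivalence of $(B_1,B_2)$ and $(B_2,B_1)$ under White asserts only that \emph{some} sequence of symmetric exchanges connects them, with no bound on its length and no serial structure; Gabow's Conjecture~\ref{conj:gabow} asks for a cyclic ordering, which (as the paper explains) corresponds to $r$ exchanges of a particular sequential form, and Hamidoune's Conjecture~\ref{conj:hamidoune} is the common strengthening bounding the number of exchanges by $r$. So proving White for $k=2$ would not settle Gabow; rather, Gabow would settle the special case of White for the pairs $(B_1,B_2)$ and $(B_2,B_1)$.
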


In this context, Gabow's conjecture would verify White's conjecture for two pairs of bases of the form $(B_1,B_2)$ and $(B_2,B_1)$. Note that, however, the conjecture says nothing on the minimum number of exchanges needed to transform one of the pairs into the other. As a common generalization of Gabow's conjecture and the special case of White's conjecture when $k=2$, Hamidoune~\cite{cordovil1993bases} proposed an optimization variant.

\begin{conj}[Hamidoune] \label{conj:hamidoune}
Let $(B_1,B_2)$ and $(B'_1,B'_2)$ be compatible basis pairs of a rank-$r$ matroid $M=(S,\cB)$. Then, $(B_1,B_2)$ can be transformed into $(B'_1,B'_2)$ by using at most $r$ symmetric exchanges.
\end{conj}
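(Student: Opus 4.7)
The plan is to induct on $d := |B_1 \setminus B_1'|$, which by compatibility equals $|B_2 \setminus B_2'| = |B_1' \setminus B_1| = |B_2' \setminus B_2|$ and satisfies $d \le r$. The base case $d=0$ means $(B_1,B_2)=(B_1',B_2')$ and needs no exchange. The inductive step consists in performing a single symmetric exchange that drops $d$ by exactly one while keeping the resulting pair compatible with $(B_1',B_2')$; iterating this at most $r$ times completes the transformation.

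Before iterating I would reduce to the case of disjoint bases. Compatibility forces $B_1 \cap B_2 = B_1' \cap B_2'$, i.e.\ the set of multiplicity-two elements in the common multiset union; denote this set by $C$. Contracting $C$ yields a matroid of rank $r - |C|$ in which $B_1 - C, B_2 - C$ and $B_1' - C, B_2' - C$ are compatible pairs of \emph{disjoint} bases, and every symmetric exchange performed in the contraction lifts to $M$ without touching $C$. Hence one may assume $B_1 \cap B_2 = B_1' \cap B_2' = \emptyset$, so that $d \le r - |C|$ and the bound $r$ is respected throughout.

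The crucial one-step claim then reads: \emph{if $(B_1,B_2)$ and $(B_1',B_2')$ are compatible pairs of disjoint bases with $B_1 \ne B_1'$, there exist $e \in B_1 \cap B_2'$ and $f \in B_2 \cap B_1'$ such that both $B_1 - e + f$ and $B_2 - f + e$ are bases}. Iterating this claim within the inductive scheme yields the desired bound. The ordinary symmetric exchange axiom furnishes, for each $e \in B_1$, \emph{some} $f \in B_2$ admitting the double swap, but gives no guarantee that $f$ lies in the prescribed subset $B_2 \cap B_1'$; enforcing this is the main obstacle.

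To attack the claim within the class of split matroids I would mimic the strategy of Theorem~\ref{thm:main}. Fix any $e \in B_1 \cap B_2'$; if no valid $f \in B_2 \cap B_1'$ exists, then every candidate is blocked by a hyperedge of a non-redundant representation, producing, as in Claims~\ref{cl:p} and~\ref{cl:q}, pairs of hyperedges $H,H'$ that are simultaneously tight on two of the four bases and whose intersection pattern is constrained by Lemma~\ref{lem:tight}\ref{it:f}. Running this construction over all candidate pairs $(e,f) \in (B_1 \cap B_2') \times (B_2 \cap B_1')$ should, in analogy with Claims~\ref{cl:x} and~\ref{cl:tight}, force contradictory upper and lower bounds on $|H \cap B_i|$ or $|H' \cap B_i'|$ for some hyperedge, so that the desired exchange must exist. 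For general matroids the obstruction is no longer a short list of hyperedges but an intricate family of tight flats, and this lack of combinatorial control is precisely what leaves Hamidoune's conjecture, like White's Conjecture~\ref{conj:white}, genuinely open beyond structured classes.
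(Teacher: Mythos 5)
You are attempting to prove what the paper itself states only as an open problem: Conjecture~\ref{conj:hamidoune} appears in Section~\ref{sec:open} as a conjecture of Hamidoune, and the paper contains no proof of it, so there is no argument of the authors to compare yours against. More importantly, your proposal does not prove it either. The reduction to disjoint bases by contracting $C=B_1\cap B_2=B_1'\cap B_2'$ and the induction on $d=|B_1\setminus B_1'|$ are sound bookkeeping, but they merely repackage the statement: all of the content now sits in your ``crucial one-step claim'' that for compatible disjoint pairs with $B_1\neq B_1'$ there exist $e\in B_1\cap B_2'$ and $f\in B_2\cap B_1'$ such that both $B_1-e+f$ and $B_2-f+e$ are bases. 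Proving that claim for all matroids is precisely what is open; it is a distance-decreasing strengthening of the $k=2$ case of White's conjecture (Conjecture~\ref{conj:white}), and, as you yourself observe, the basis exchange axiom gives some $f\in B_2$ but gives no control forcing $f$ into the prescribed set $B_2\cap B_1'$. Assuming the claim and iterating is not a proof of the conjecture; it is the conjecture.

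Your final paragraph is a plan rather than an argument, and even for split matroids it is not carried out: ``should force contradictory upper and lower bounds'' is a hope, not a deduction, and the machinery of Claims~\ref{cl:p}--\ref{cl:tight} is tailored to the cyclic-ordering setting (a maximal partial ordering, the sets $R_i$, and the chains of tight hyperedges built from a failed $j$-th phase), so it does not transfer to the exchange-distance setting without substantial new constructions. For split matroids an exchange-distance result of this type is in fact established in the literature the paper cites~\cite{berczi2024exchange}, but by a dedicated argument, not by the sketch you give. In short, the genuine gap is that the one-step exchange claim is assumed rather than proved, and that claim is equivalent in difficulty to Hamidoune's conjecture itself.
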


In~\cite{berczi2024weighted}, Bérczi, Mátravölgyi and Schwarcz formulated a weighted extension of Hamidoune's conjecture. Let $M=(S,\cB)$ be a matroid and $w\colon S\to\mathbb{R}_+$ be a weight function on the elements of the ground set $S$. Given a pair $(B_1,B_2)$ of bases, we define the {\it weight of a symmetric exchange} $B_1-e+f$ and $B_2-f+e$ to be $w(e)/2+w(f)/2$, that is, the average of the weights of the exchanged elements.

\begin{conj}[Bérczi, Mátravölgyi, Schwarcz]\label{conj:weighted}
Let $(B_1,B_2)$ and $(B'_1,B'_2)$ be compatible basis pairs of a matroid $M=(S,\cB)$, and let $w\colon S\to\bR_+$. Then, $(B_1,B_2)$ can be transformed into $(B'_1,B'_2)$ by using symmetric exchanges of total weight at most $w(B_1)/2+w(B_2)/2=w(B'_1)/2+w(B'_2)/2$.
\end{conj}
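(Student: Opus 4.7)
The plan is to reduce Conjecture~\ref{conj:weighted} to a purely combinatorial sequencing statement, and then attempt it via an exchange-graph argument, expecting that the general case will require a genuinely new structural insight.

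First, if $e\in B_1\cap B_2$ then the multiplicity of $e$ in $B_1\sqcup B_2$ is $2$, and compatibility forces the same in $B'_1\sqcup B'_2$, so $e\in B'_1\cap B'_2$. Contracting such a common $e$ reduces the matroid rank and the budget $(w(B_1)+w(B_2))/2$ by $w(e)$ while preserving compatibility, so one may assume $B_1\cap B_2=\emptyset=B'_1\cap B'_2$. Set $X\coloneqq B_1\cap B'_2$ and $Y\coloneqq B_2\cap B'_1$, the elements that must change sides; the reduction yields $|X|=|Y|$. If $|X|$ symmetric exchanges can be performed in which each element of $X\cup Y$ is swapped exactly once, their total weight is $(w(X)+w(Y))/2\le (w(B_1)+w(B_2))/2$, with slack $(w(B_1\cap B'_1)+w(B_2\cap B'_2))/2$. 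So the weight estimate is essentially free, and the content lies in producing such a \emph{clean} sequence.

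The natural route is through the auxiliary bipartite graph $G$ on $X\cup Y$ whose edges are the pairs $\{e,f\}$ with $e\in X$, $f\in Y$, $B_1-e+f\in\cB$ and $B_2-f+e\in\cB$. The symmetric exchange axiom forces $G$ to have no isolated vertices, and the first step is to prove $G$ contains a perfect matching via a Hall-type argument; for strongly base orderable matroids this is standard and any order of executing the matching works. The second and harder step is to order the matching so that each successive exchange remains feasible after the earlier ones have been applied. For split matroids, I would adapt the hyperedge-tightness machinery of Section~\ref{sec:split}: any obstruction to a clean pair $(e,f)$ should force simultaneous $H_i$-tightness of the four bases on a common hyperedge, which by Lemma~\ref{lem:tight}\ref{it:f} is a strong structural restriction, and one should be able to unwind it by a local substitution in the spirit of Claims~\ref{cl:p}--\ref{cl:xxxxxxxx}.

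The main obstacle is precisely the sequencing constraint: executing one matched exchange can destroy the feasibility of another, possibly forcing detours through elements of $B_1\cap B'_1$ or $B_2\cap B'_2$. The slack in the budget does allow for detours in principle, but controlling their number and weight for arbitrary matroids appears to need a new global invariant, since the conjecture simultaneously strengthens the $k=2$ case of White's Conjecture~\ref{conj:white} and, by setting $w\equiv 1$, Hamidoune's Conjecture~\ref{conj:hamidoune}. I would therefore expect the realistic deliverables of this plan to be confirmations of Conjecture~\ref{conj:weighted} for strongly base orderable and split matroids, leaving the general case as a genuinely open problem.
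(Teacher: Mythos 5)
This statement is Conjecture~\ref{conj:weighted}, which the paper states as an \emph{open problem} in Section~\ref{sec:open}; there is no proof in the paper to compare against, and your submission is, by its own admission, a research plan rather than a proof. As a plan it is reasonable in outline (the contraction of $B_1\cap B_2=B'_1\cap B'_2$ and the identity $|X|=|Y|$ are correct, and the reduction to a ``clean'' exchange sequence is the right framing), but two points deserve correction. First, the claim that the symmetric exchange axiom forces your bipartite graph $G$ on $X\cup Y$ to have no isolated vertices is unjustified: for $e\in X\subseteq B_1$, the symmetric exchange partner of $e$ with respect to the pair $(B_1,B_2)$ may lie in $B_2\cap B'_2$ rather than in $Y$, and the partner with respect to $(B_1,B'_1)$ certifies that $B'_1-f+e$ is a basis, not that $B_2-f+e$ is. So even the degree condition, let alone a Hall-type perfect matching in this \emph{restricted} graph, requires an argument you do not supply. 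Second, the remark that ``the weight estimate is essentially free'' understates the difficulty: in the extreme compatible case $B'_1=B_2$, $B'_2=B_1$ the slack $(w(B_1\cap B'_1)+w(B_2\cap B'_2))/2$ vanishes, every element must be exchanged exactly once with no detours, and the required clean sequence is precisely Gabow's Conjecture~\ref{conj:gabow} — i.e., the weight bound is not a bookkeeping afterthought but encodes the full open problem.

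Your expectation that the split-matroid case can be handled by adapting the tightness machinery of Section~\ref{sec:split} is plausible — indeed, the unweighted exchange-distance question for split matroids is resolved in~\cite{berczi2024exchange} and the weighted conjecture originates in~\cite{berczi2024weighted} — but nothing in the present paper, nor in your proposal, constitutes a proof of Conjecture~\ref{conj:weighted} in general.
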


By setting the weights to be identically $1$, we get back Hamidoune's conjecture. The question naturally arises: can we formulate extensions of Conjectures~\ref{conj:hamidoune} and~\ref{conj:weighted} for basis sequences of length greater than two? 

Let $(B_1,\dots,B_k)$ be a sequence  of $k$ bases of a matroid $M$, and assume that there exists distinct indices $\{i_1,\dots,i_q\}\subseteq [k]$ and $e_j\in B_{i_j}$ such that $B_{i_j}-e_j+e_{j+1}$ is a basis for each $j\in[q]$. Then, we say that the sequence $(B'_1,\dots,B'_k)$ where $B'_{\ell}=B_{i_j}-e_{j}+e_{j+1}$ if $\ell=i_j$ for some $j\in[q]$ and $B'_\ell=B_\ell$ otherwise, is obtained by a {\it cyclic exchange}. As a generalization of Conjecture~\ref{conj:hamidoune}, we propose the following.

\begin{conj}
Let $(B_1,\dots,B_k)$ and $(B'_1,\dots,B'_k)$ be compatible sequences of $k$ bases of a rank-$r$ matroid. Then, $(B_1,\dots,B_k)$ can be transformed into $(B'_1,\dots,B'_k)$ by using at most $r$ cyclic exchanges.
\end{conj}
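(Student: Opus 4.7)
The plan is to induct on the potential $\Phi=\max_{i\in[k]}|B_i\setminus B'_i|$, which satisfies $\Phi\leq r$ (matching the conjectured bound), and to show that a carefully chosen single cyclic exchange always decreases $\Phi$ by one. To locate such an exchange I would construct an auxiliary digraph $D$ on vertex set $[k]$ that records where individual misplaced elements need to travel: for each $e\in(B_i\setminus B'_i)\cap(B'_j\setminus B_j)$, place an arc from $j$ to $i$ labelled by $e$. Compatibility gives $|B_i\setminus B'_i|=|B'_i\setminus B_i|$ for every $i$, so $D$ has equal in- and out-degrees at every vertex and therefore decomposes into arc-disjoint directed cycles. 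Any cycle covering every index attaining $\Phi$ would, if realisable as an actual cyclic exchange, simultaneously remove one misplaced element from each maximum-difference basis, and iterating would give the bound $r$.

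The central difficulty is the realisation step. A cycle in $D$ only records \emph{where} each element is supposed to go, not that the induced moves $B_{i_j}-e_j+e_{j+1}\in\cB$ are simultaneously basis-preserving. Already in the smallest nontrivial case $q=k=2$ the statement reduces precisely to Hamidoune's Conjecture~\ref{conj:hamidoune}, which remains open; so a fully general proof is out of reach with present techniques. A sensible first milestone would be to verify the conjecture for classes where Hamidoune's conjecture is known, with split matroids the natural candidate. There, Lemma~\ref{lem:tight} together with the alternating substitution machinery developed in Claim~\ref{cl:xxxxxxxx} should be adaptable: starting from an arbitrary cycle in $D$, one would iteratively replace a candidate element $e_j$ by a symmetric-exchange partner relative to an adjacent tight hyperedge until every consecutive swap along the cycle becomes basis-preserving. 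For strongly base orderable matroids, one instead applies the base ordering bijection between the disjoint union multisets of $(B_1,\dots,B_k)$ and $(B'_1,\dots,B'_k)$, reading off the required cyclic exchanges directly from its orbit structure.

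A further delicate issue is that performing a cyclic exchange along one cycle may perturb bases outside it, potentially raising $|B_m\setminus B'_m|$ for an index $m$ that already attains $\Phi$ but lies off the chosen cycle; mere monotonicity of $\Phi$ will not survive this. I would therefore track the refined invariant obtained by sorting $(|B_1\setminus B'_1|,\dots,|B_k\setminus B'_k|)$ in decreasing order and requiring each cyclic exchange to strictly lower this sorted tuple in lexicographic order, with the additional safety condition that no basis newly attains the current maximum. Establishing the existence of a cyclic exchange that lowers this refined potential in every non-terminal configuration, together with the realisation step above, is the main obstacle; the algorithmic proof of Theorem~\ref{thm:main} provides a template for how such an argument could unfold in the split case, and its ideas seem to me the most promising starting point for chipping away at the full conjecture.
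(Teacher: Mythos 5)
There is no proof to compare against: the statement you are addressing is one of the open conjectures the authors propose in Section~\ref{sec:open} as a generalization of Hamidoune's Conjecture~\ref{conj:hamidoune}; the paper offers no argument for it. Your text, to its credit, does not claim to close the gap either --- but as it stands it is a research programme, not a proof, and the two obstacles you name are genuinely fatal to it in its current form. First, the realisation step: the auxiliary digraph $D$ encodes only where misplaced elements ought to travel, and an Eulerian cycle decomposition of $D$ gives no guarantee that any of the induced moves $B_{i_j}-e_j+e_{j+1}$ are bases, let alone all of them simultaneously. As you observe, already the case $k=q=2$ is exactly Conjecture~\ref{conj:hamidoune}, which is open; so no argument along these lines can succeed in general without first resolving that. (A smaller issue with the construction itself: since distinct $B_i$'s may share elements, an element can occur in several bases of each sequence, and the arcs of $D$ depend on a choice of matching between occurrences; this choice must be made and justified before the degree argument applies.)

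Second, even granting realisability, the potential argument is not established. You correctly note that a cyclic exchange along one cycle of $D$ can increase $|B_m\triangle B'_m|$ for indices $m$ off the cycle, so the naive potential $\Phi$ is not monotone; but the refined lexicographic invariant you propose is only a desideratum --- you give no argument that a cyclic exchange strictly lowering it (and avoiding the creation of new maximizers) exists in every non-terminal configuration. Without that existence claim the induction does not start. The suggestions for split and strongly base orderable matroids are reasonable directions --- the substitution machinery of Claims~\ref{cl:p}--\ref{cl:xxxxxxxx} is indeed the natural tool in the split case --- but they are sketches of where one might look, not arguments. In summary: the statement remains open, and your proposal identifies the right difficulties without overcoming them.
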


Given a weight function $w\colon S\to\mathbb{R}_+$ on the elements of the ground set, let us define the {\it weight of a cyclic exchange} that moves elements $e_j\in B_{i_j}$ for $j\in[q]$ to be $\frac{1}{k}\sum_{j=1}^q w(e_j)$. As a generalization of Conjecture~\ref{conj:weighted}, the weighted counterpart is as follows.

\begin{conj}
Let $(B_1,\dots,B_k)$ and $(B'_1,\dots,B'_k)$ be compatible sequences of $k$ bases of a matroid $M=(S,\cB)$, and let $w\colon S\to\bR_+$. Then, $(B_1,\dots,B_k)$ can be transformed into $(B'_1,\dots,B'_k)$ by using cyclic exchanges of total weight at most $\frac{1}{k}\sum_{i=1}^k w(B_i)=\frac{1}{k}\sum_{i=1}^k w(B'_i)$.
\end{conj}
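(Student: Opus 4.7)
The plan is to proceed by induction on $k$, taking Conjecture~\ref{conj:weighted} (the case $k=2$) as the base. Before starting, I would note that the announced equality $\frac{1}{k}\sum_i w(B_i) = \frac{1}{k}\sum_i w(B'_i)$ is automatic from compatibility: equal multiset unions give $\sum_i \mathbf{1}_{B_i}(s) = \sum_i \mathbf{1}_{B'_i}(s)$ for every $s\in S$, so pairing against $w$ yields the equality. The nontrivial task is the weight-bounded transformation, and the normalisation $\tfrac{1}{k}$ in the definition of the weight of a cyclic exchange is exactly what should make an amortised accounting work.

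For the inductive step, my strategy would be to \emph{peel off} one coordinate at a time. Fix one index, say $k$, and first aim to move $B_k$ to $B'_k$ by a sequence of cyclic exchanges, then recurse on the remaining $k-1$ coordinates with a suitably modified target sequence. To move $B_k$ toward $B'_k$, process elements of $B_k - B'_k$ one at a time: for such an element $e$, compatibility forces $e\in B'_j$ for some $j\neq k$, so iterating the basis-exchange axiom along a chain $B_k\to B_{i_1}\to\dots\to B_{i_q}$ should produce a cyclic exchange that removes $e$ from $B_k$ while inserting into $B_k$ an element already in $B'_k$. The weight of each such exchange is $\tfrac{1}{k}$ times the sum of weights on the chain, and summing over all $e\in B_k - B'_k$, together with the inductive bound for the remaining $k-1$ coordinates, is intended to telescope to $\tfrac{1}{k}\sum_i w(B_i)$. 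To control the bookkeeping I would likely define a potential $\Phi=\sum_i |B_i\triangle B'_i|$ and show that each elementary step of weight $\tfrac{1}{k}w(e)$ decreases $\Phi$ by at least $2$, so that an element participates in the budget essentially once.

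The main obstacle is twofold. First, the base case $k=2$ is itself open in general (it is Conjecture~\ref{conj:weighted}), so any proof via this induction is conditional except in classes where the $k=2$ case is established, such as strongly base orderable matroids and split matroids. Second, and more seriously, the existence of a cheap-enough cyclic exchange at each inductive step requires a structural lemma generalising Lemma~\ref{lem:tight} to sequences of $k$ bases; it is not a priori clear that a greedy cheapest cyclic exchange always makes progress toward $(B'_1,\dots,B'_k)$ while respecting the amortised budget, since the weight of the chain intermediates is not obviously bounded. As a realistic first target I would adapt the phase-based approach of Theorem~\ref{thm:main}: construct the target sequence element-by-element across $k$ parallel orderings and use hyperedge-tightness (Lemma~\ref{lem:tight}\ref{it:f}) to certify existence of cheap cyclic exchanges in each phase, in the spirit of Claims~\ref{cl:p}--\ref{cl:xxxxxxxx}. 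Extending beyond the split case, where no hyperedge structure is available, is where I expect the main new idea to be needed.
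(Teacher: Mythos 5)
The statement you are addressing is not proved in the paper at all: it is posed in Section~\ref{sec:open} as an open conjecture (a weighted, length-$k$ analogue of Conjecture~\ref{conj:hamidoune} and Conjecture~\ref{conj:weighted}), so there is no proof of it to compare against, and a complete argument would be a new result rather than a reconstruction. Your text is a plan rather than a proof, and you candidly name its two conditional points yourself; let me make precise why, as it stands, the plan does not close the statement. First, the base case $k=2$ is exactly Conjecture~\ref{conj:weighted}, which is open in general, so the induction is unconditional only over classes where that case is known. Second, and more fundamentally, the peel-one-coordinate induction clashes with the weight normalisation and with the tightness of the budget. An exchange confined to the coordinates $1,\dots,k-1$ costs $\frac{1}{k}\sum_j w(e_j)$ inside the $k$-sequence, while the inductive hypothesis for $k-1$ bases is stated with the factor $\frac{1}{k-1}$; after rescaling, the recursion alone may consume $\frac{1}{k}\sum_{i<k}w(\cdot)$ of the budget, leaving only about $\frac{1}{k}w(B'_k)$ for the peeling phase. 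But each peeling chain charges not only the element $e\in B_k- B'_k$ being expelled, it also charges every intermediate element of the bases it passes through, and in the extremal instance highlighted at the end of Section~\ref{sec:open} (transforming $(B_1,\dots,B_k)$ into $(B_2,\dots,B_k,B_1)$ for pairwise disjoint bases) the bound $\frac{1}{k}\sum_i w(B_i)=\frac{1}{k}w(S)$ has no slack whatsoever: every element can be charged essentially once in total. A strategy that first fixes $B_k$ via chains through $B_1,\dots,B_{k-1}$ and then fixes those coordinates in a second pass will generically charge the same elements twice, so the telescoping you hope for cannot be obtained from the potential $\Phi=\sum_i|B_i\triangle B'_i|$ alone, since $\Phi$ counts steps but says nothing about which weights are consumed.

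Your observation that compatibility forces $\frac{1}{k}\sum_i w(B_i)=\frac{1}{k}\sum_i w(B'_i)$ is correct, and the suggestion to adapt the phase-based machinery of Theorem~\ref{thm:main} (tight hyperedges via Lemma~\ref{lem:tight}\ref{it:f}, in the spirit of Claims~\ref{cl:p}--\ref{cl:xxxxxxxx}) is a reasonable research direction; note, however, that the paper's argument is purely unweighted and establishes only the existence of valid cyclic choices for split matroids, so even a successful adaptation would yield a special case of the conjecture with additional work needed for the weighted accounting, not the general statement.
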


Note that in both cases, the bounds are tight in the sense that $r$ cyclic exchanges are definitely needed to transform the sequence $(B_1,\dots,B_{k-1},B_k)$ into $(B_2,\dots,B_k,B_1)$.

\medskip

\paragraph{Acknowledgement} 

The authors are grateful to Tamás Schwarcz for helpful discussions. Áron Jánosik was supported by the EKÖP-24 University Excellence Scholarship Program of the Ministry for Culture and Innovation from the source of the National Research, Development and Innovation Fund. The research has been implemented with the support provided by the Lend\"ulet Programme of the Hungarian Academy of Sciences -- grant number LP2021-1/2021, by the Ministry of Innovation and Technology of Hungary -- grant number ELTE TKP 2021-NKTA-62, and by Dynasnet European Research Council Synergy project -- grant number ERC-2018-SYG 810115.

\bibliographystyle{abbrv}
\bibliography{gengab}

\end{document}